\newcommand \Hilb{\operatorname{Hilb}}
\newtheorem{theorem}{Theorem}[section]
\newtheorem{definition}[theorem]{Definition}
\newtheorem{lemma}[theorem]{Lemma}
\newtheorem{proposition}[theorem]{Proposition}
\newtheorem{corollary}[theorem]{Corollary}
\begin{document}
\title{HILBERT SERIES OF BINOMIAL EDGE IDEALS}
\author{Arvind Kumar}
\email{arvkumar11@gmail.com}

\author{Rajib Sarkar}
\email{rajib.sarkar63@gmail.com}
\address{Department of Mathematics, Indian Institute of Technology
Madras, Chennai, INDIA - 60036}

\begin{abstract}
Let $G$ be a finite simple graph on $n$ vertices and $J_G$ denote the
corresponding binomial edge ideal in the polynomial ring $S = K[x_1, \ldots, x_n, y_1,
\ldots, y_n].$ In this article, we compute the Hilbert series of binomial
edge ideal of decomposable graphs in terms of Hilbert series of its indecomposable
subgraphs. Also, we compute the Hilbert series of binomial edge ideal of join
of two graphs and as a consequence we obtain the Hilbert series of complete
$k$-partite graph, fan graph, multi-fan graph  and wheel graph.
\end{abstract}
\keywords{Binomial edge ideal, Hilbert Series, Join of Graphs, Multiplicity}
\thanks{AMS Subject Classification (2010): 13A02, 05E40}
\maketitle

\section{Introduction}
Let $G$ be a finite simple graph on the vertex set $[n]$. Herzog et al. in \cite{HH1}
and Ohtani, independently  in \cite{oh}, introduced the notion of
binomial edge ideal corresponding to a finite simple graph.  Let
$S=K[x_1, \ldots, x_n,y_1, \ldots, y_n]$, where $K$ is a field. 
The binomial edge ideal of the graph $G$ is  $J_G =(x_i y_j - x_j
y_i : \{i,j\} \in E(G), \; i <j)$. Researchers have been trying to relate 
the algebraic properties of $J_G$ with the combinatorial properties of $G$, 
see for example \cite{AB,her1,HH1,JA1,JNR,KM6,FM}. 
While the Castelnuovo-Mumford regularity of binomial edge ideals of
several classes of graphs are known, not much is known about other
invariants such as Betti numbers, Hilbert series and multiplicity.
Betti numbers and Hilbert series of the binomial edge ideals of cycles
were computed by Zafar and Zahid in \cite{SZ}.  Mohammadi and Sharifan
studied the Hilbert series  of quasi cycles in \cite{FM}. In
\cite{MR3195706}, Schenzel and Zafar computed dimension, depth,
multiplicity and Betti numbers of complete bipartite graphs. 
The aim of this article is to compute the Hilbert series of 
binomial edge ideals of graphs in terms of the Hilbert series of
certain subgraphs.

A graph $G$ is said to be decomposable if there exist induced subgraphs $G_1$ and $G_2$ such that  $G= G_1 \cup G_2$, $V(G_1)\cap V(G_2)=\{v\}$ and $v$ is 
a free vertex of $G_1$ and $G_2$. A graph $G$ is indecomposable, if it is not decomposable.
Upto permutation, $G$
has  a unique decomposition into indecomposable subgraphs, i.e. there exist indecomposable  subgraphs $G_1,\ldots,G_r$  of $G$ with 
$G=G_1\cup \cdots \cup G_r$ such that for each $i \neq j$, either 
$V(G_i) \cap V(G_j) = \phi$ or $V(G_i) \cap V(G_j) =\{v_{i,j}\}$ and $v_{i,j}$ is a free vertex 
of $G_i$ and $G_j$. In Section $3$, we obtain the Hilbert 
series and multiplicity  of a decomposable graph in terms of the Hilbert series of
its indecomposable subgraphs (Theorem \ref{3.3}). As consequences we obtain the Hilbert series
and multiplicity of Cohen-Macaulay closed graphs and $k$-handle lollipop graphs. 

Let $H$ and $H'$ be two graphs  with the vertex sets $[p]$ and $[q]$,
respectively. The join of $H$ and $H'$, denoted by $H*H'$  is 
the graph with vertex set $[p] \sqcup [q]$ and the edge set
$E(H*H')= E(H) \cup E(H') \cup \{\{i,j\}| i \in [p], j \in [q]\}$. 
Our aim is to compute the Hilbert series of  $H*H'$ in terms of
the Hilbert series of $H$ and $H'$. First, we treat the case
when both the graphs are disconnected (Theorem \ref{4.3}).
In order to compute the Hilbert series of join of two arbitrary graphs, it is necessary to understand
the structure of some other intermediate graphs. We define a
product operation of a graph with the complete graph and study the minimal primes and Hilbert
series of the resulting graph. We further compute the Hilbert series of several intermediate graphs and use those
results finally to obtain:
\vskip 2mm \noindent
\textbf{Theorem \ref{4.15}}. ~
{\it
Let $H$ and $H'$ be two graphs on vertex sets $[p]$ and $[q]$, respectively.
Let $G=H*H'$ be the join of $H$ and $H'$. Let 
$S_{H}=K[x_{i},y_{i} : i\in V(H)]$, $S_{H^{'}}=K[w_{i},z_{i} : i\in V(H^{'})]$ and 
 $S=K[x_{i},y_{i},w_{j},z_{j} : i\in V(H), j\in V(H^{'})]$.
Then}
$$ \Hilb_{S/J_{G}}(t) =\Hilb_{S_{H}/J_{H}}(t)+\Hilb_{S_{H^{'}}/J_{H^{'}}}(t) +
\frac{(p+q-1)t+1}{(1-t)^{p+q+1}}-\frac{(p-1)t+1}{(1-t)^{p+1}}-
\frac{(q-1)t+1}{(1-t)^{q+1}}.$$

As consequences, we obtain the Hilbert series of binomial 
edge ideal of complete $k$-partite graph, wheel graph, fan graph and 
multi-fan graph.

\vskip 2mm
\noindent
\textbf{Acknowledgements:} The authors are grateful to their advisor A. V. Jayanthan for
constant support, valuable ideas and suggestions. The first author thanks the National Board
for Higher Mathematics, India for the financial support. The second author thanks 
University Grant Commission, Government of India for the financial support. 

\section{Preliminaries}
In this section we  recall  some notation and fundamental results which are used
throughout this article.

Let $G$  be a  finite simple graph with the vertex set $V(G)$ and edge set
$E(G)$. For a subset $A \subseteq V(G)$, $G[A]$ denotes the induced
subgraph of $G$ on the vertex set $A$, that is, for $i, j \in A$, 
$\{i,j\} \in E(G[A])$ if and only if $ \{i,j\} \in E(G)$. A subset $U \subseteq V(G)$ 
is said to be a \textit{clique} if $G[U]$ is a complete graph.
A clique $U$ is said to be a maximal clique if for every $v \in V(G) \setminus U$,
$U \cup \{v\}$ is not a clique. A vertex $v \in V(G)$ is called a \textit{free vertex} if
$v$ belongs to exactly one maximal clique.
For a vertex $v$, $G \setminus v$ denotes the  induced subgraph of $G$
on the vertex set $V(G) \setminus \{v\}$. A vertex $v \in V(G)$ is
said to be a \textit{cut} vertex if $G \setminus v$ has strictly more
connected components than $G$.
For a vertex $v$, $G_v$ denotes the graph on the vertex set $V(G)$ and edge set
$E(G_v) =E(G) \cup \{ \{u,w\}: u,w \in N_G(v)\}$, where for any $x\in V(G)$,
$N_G(x) = \{u \in V(G) : \{u,x\} \in E(G) \}$.
Complement of $G$, denoted by $G^c$ is the graph on the vertex
set $V(G)$ and the edge set $E(G^c)= \{\{i,j\} : i \neq j, \{i,j\} \notin E(G) \}$.

For a subset $T$ of $[n]$, let $\bar{T} = [n]\setminus T$ and $c_G(T)$
denote the number of connected components of $G[\bar{T}]$. Let $G_1,\cdots,G_{c_{G}(T)}$ be connected 
components of $G[\bar{T}]$. For each $i$, let $\tilde{G_i}$ denote the complete graph on $V(G_i)$ and
$$P_T(G) = (\underset{i\in T} \cup \{x_i,y_i\}, J_{\tilde{G_1}},\cdots, J_{\tilde{G}_{c_G(T)}}).$$ 
It was shown by Herzog et al. that $J_G =  \underset{T \subseteq [n]}\cap P_T(G)$, \cite{HH1}.
For each $i \in T$, if $i$ is a cut vertex of the graph $G[\bar{T} \cup \{i\}]$,
then we say that $T$ has the cut point property. Let $\mathscr{C}(G) =\{\phi \}
\cup \{ T: T \; \text{has cut point property} \}$ and $\mathscr{M}(G) = \{ P_T(G) : T \in \mathscr{C}(G)\}$. 
In  \cite{HH1}, the authors proved that $P$ is the minimal prime associated to $J_G$ if and only if $P \in \mathscr{M}(G)$.  
 
Let $M =\underset{k \in \mathbb{N}} \bigoplus M_k$ be a graded $S$-module such that for each $k \in \mathbb{N}$, $l(M_k) < \infty$.
The  function $H_M : \mathbb{N}  \rightarrow \mathbb{N}$ defined as $H_M(k) = l(M_k)$ is called the Hilbert function of the module $M$.
The Hilbert series of $M$ is the generating function of the Hilbert function $H_M$ and is denoted by,
$\Hilb_{M}(t)= \underset{k \in \mathbb{N}} \sum l(M_k) t^k$. Let $M$ be a finitely generated graded $S$-module of dimension $d$. 
The Hilbert polynomial of $M$, denoted by $P_M(X)$ is the unique
polynomial with rational coefficients such that $H_M(k) = P_M(k)$ for $k\gg0$. It is known
that one can express the polynomial $P_M(X)$ as $$P_M(X) = \sum_{i=0}^{d-1} (-1)^{d-1-i} e_{d-1-i}(M) \binom{X+i} {i},$$
where $e_j(M)$'s are integers in \cite[Lemma 4.1.4]{bh}. The coefficient $e(M)=e_0(M)$ is called the multiplicity of $M$.

\section{Hilbert series of binomial edge ideal of decomposable graphs}

In this section, we compute the Hilbert series of decomposable graphs in terms of the 
Hilbert series of its indecomposable components.
Throughout this section, by writing $G = G_1 \cup \cdots \cup G_r$, we mean that $G_i$'s are induced subgraphs of G such that $V(G_i) \cap V(G_j) = \phi$ or $\{v_{i,j}\}$
where $v_{i,j}$ is a free vertex in $G_i$ and $G_j$. We begin by recalling the Betti polynomial of $M$.

\begin{definition}
Let $M$ be a finite graded $S$-module. Then we denote by
$$ B_{M}(s,t)=\displaystyle \sum _{i,j} \beta_{ij}(M)s^{i}t^{j}, $$ the Betti polynomial of $M$, where $\beta_{ij}(M)$ are the graded Betti numbers of $M$.
\end{definition}
Recently J. Herzog and G. Rinaldo \cite[Proposition 3]{her2} proved that if
$G=G_{1}\cup G_{2}$, then 
$$ B_{S/J_{G}}(s,t)= B_{S/J_{G_{1}}}(s,t) B_{S/J_{G_{2}}}(s,t)= B_{S_1/J_{G_1}}(s,t)B_{S_2/J_{G_2}}(s,t), $$ 
where $S_i = K[x_j,y_j:j \in V(G_i)]$ for $i \in \{1,2\}$.
   
 By \cite[Lemma 4.1.13]{bh}, the Hilbert series of a graded $S$-module $S/J_G$ is 
 $$\Hilb_{S/J_{G}}(t)=\frac{B_{S/J_{G}}(-1,t)}{(1-t)^{2n}}.$$
 
 As an immediate consequence, we obtain
\begin{theorem}\label{3.3}
Let $G=G_{1}\cup G_{2}$ be a graph on the vertex set $[n]$. Then 
$$ \Hilb_{S/J_{G}}(t)=(1-t)^{2} \Hilb_{S_{1}/J_{G_{1}}}(t) \Hilb_{S_{2}/J_{G_{2}}}(t),$$ 
where $S_{i}=K[{x_{j},y_{j}:j \in V(G_{i})}]$, for $i=1,2$. In particular,
\[\dim(S/J_G) = \dim(S_1/J_{G_1}) +\dim(S_2/J_{G_2})-2 \ \textit{and} \ e(S/J_{G})=e(S_{1}/J_{G_{1}})e(S_{2}/J_{G_{2}}).\]
\end{theorem}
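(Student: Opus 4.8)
The plan is to derive the Hilbert series identity directly from the two results recalled immediately before the statement. First I would invoke the Herzog--Rinaldo factorization of Betti polynomials, which gives
$$ B_{S/J_{G}}(s,t)= B_{S_{1}/J_{G_{1}}}(s,t)\,B_{S_{2}/J_{G_{2}}}(s,t), $$
valid precisely because $G = G_{1}\cup G_{2}$ is a decomposition along a common free vertex. Next I would substitute $s=-1$ in this polynomial identity; since evaluation at a fixed value is a ring homomorphism on polynomials, the factorization is preserved, yielding $B_{S/J_{G}}(-1,t)=B_{S_{1}/J_{G_{1}}}(-1,t)\,B_{S_{2}/J_{G_{2}}}(-1,t)$.

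The key bookkeeping step is to divide by the appropriate power of $(1-t)$ using the formula $\Hilb_{S/J_{G}}(t)=B_{S/J_{G}}(-1,t)/(1-t)^{2n}$. If $G_{1}$ has $n_{1}$ vertices and $G_{2}$ has $n_{2}$ vertices, then because $V(G_{1})$ and $V(G_{2})$ share exactly the one free vertex $v$, we have $n = n_{1}+n_{2}-1$. Writing each factor Hilbert series as $\Hilb_{S_{i}/J_{G_{i}}}(t)=B_{S_{i}/J_{G_{i}}}(-1,t)/(1-t)^{2n_{i}}$ and multiplying, the product of the two denominators is $(1-t)^{2n_{1}+2n_{2}}=(1-t)^{2n+2}$, whereas the target denominator is $(1-t)^{2n}$. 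The discrepancy is exactly the factor $(1-t)^{2}$ appearing in the statement, which I would track carefully to confirm the claimed formula.

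For the dimension statement I would extract the order of the pole at $t=1$: the dimension of $S/J_{G}$ equals the order of the pole of $\Hilb_{S/J_{G}}(t)$ at $t=1$. Since multiplying by $(1-t)^{2}$ lowers the pole order by $2$ and the product of the two series has pole order $\dim(S_{1}/J_{G_{1}})+\dim(S_{2}/J_{G_{2}})$, this immediately gives $\dim(S/J_{G})=\dim(S_{1}/J_{G_{1}})+\dim(S_{2}/J_{G_{2}})-2$. For the multiplicity, writing $\Hilb_{S_{i}/J_{G_{i}}}(t)=Q_{i}(t)/(1-t)^{d_{i}}$ with $d_{i}=\dim(S_{i}/J_{G_{i}})$ and $Q_{i}(1)=e(S_{i}/J_{G_{i}})$, the leading Laurent behavior of the product near $t=1$ shows the numerator of $\Hilb_{S/J_{G}}$, evaluated at $t=1$, equals $Q_{1}(1)Q_{2}(1)$; the two extra factors of $(1-t)$ are absorbed by the drop in dimension and do not affect the residue, yielding $e(S/J_{G})=e(S_{1}/J_{G_{1}})\,e(S_{2}/J_{G_{2}})$.

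I expect no serious obstacle here, since the heavy lifting is done by the Herzog--Rinaldo Betti factorization; the proof is essentially a substitution followed by matching powers of $(1-t)$. The one point requiring care is the exponent accounting, namely verifying that the shared free vertex forces $n=n_{1}+n_{2}-1$ and hence that the denominator mismatch is precisely $(1-t)^{2}$, and then ensuring that the pole-order and residue extractions for the dimension and multiplicity are consistent with the shift by $(1-t)^{2}$.
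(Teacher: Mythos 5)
Your proposal is correct and follows essentially the same route as the paper: it invokes the Herzog--Rinaldo factorization $B_{S/J_{G}}(s,t)=B_{S_1/J_{G_1}}(s,t)B_{S_2/J_{G_2}}(s,t)$, evaluates at $s=-1$, uses $n=n_1+n_2-1$ to account for the exponent mismatch $(1-t)^{2n+2}$ versus $(1-t)^{2n}$, and then reads off dimension and multiplicity by writing each series as $Q(t)/(1-t)^{d}$ with $Q(1)\neq 0$ and matching pole orders and numerators at $t=1$. Your pole-order/residue language is just a rephrasing of the paper's comparison of the two sides in the form $Q_{S/J_G}(t)/(1-t)^d=(1-t)^2\,Q_1(t)Q_2(t)/(1-t)^{d_1+d_2}$, so there is nothing substantive to add.
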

\begin{proof}
For $i=1,2$, let $G_i$ be a graph on vertex set $[m_i]$. Note that $n=m_1+m_2-1$.
Now, \begin{eqnarray*}
\Hilb_{S/J_{G}}(t)&=&\frac{B_{S/J_{G}}(-1,t)}{(1-t)^{2n}} 
\\&=& \frac{B_{S/J_{G}}(-1,t)}{(1-t)^{2(m_{1}+m_{2}-1)}} 
\\ &= &(1-t)^{2} \frac{B_{S_{1}/J_{G_{1}}}(-1,t)}{(1-t)^{2 m_{1}}} \frac{B_{S_{2}/J_{G_{2}}}(-1,t)}{(1-t)^{2 m_{2}}}
\\ &= &(1-t)^{2} \Hilb_{S_{1}/J_{G_{1}}}(t) \Hilb_{S_{2}/J_{G_{2}}}(t).
\end{eqnarray*}

Now, let $d= \dim(S/J_G) $ and $d_{i}$ = $\dim(S_{i}/J_{G_{i}})$, for $i=1,2$.
It follows from \cite[Corollary 4.1.8]{bh} that $ \Hilb_{S/J_{G}}(t) = \frac{Q_{S/J_{G}}(t)}{(1-t)^{d}}$, 
$ \Hilb_{S_1/J_{G_1}}(t) = \frac{Q_{S_1/J_{G_1}}(t)}{(1-t)^{d_1}}$ and 
$ \Hilb_{S_2/J_{G_2}}(t) = \frac{Q_{S_2/J_{G_2}}(t)}{(1-t)^{d_2}}$. 
Therefore \begin{eqnarray*}
\frac{Q_{S/J_{G}}(t)}{(1-t)^{d}} 
&=&(1-t)^{2} \frac{Q_{S_{1}/J_{G_{1}}}(t)}{(1-t)^{d_{1}}}\frac{Q_{S_{2}/J_{G_{2}}}(t)}{(1-t)^{d_{2}}}.
\end{eqnarray*}
Hence $d=d_1+d_2-2$ and $Q_{S/J_{G}}(t) = Q_{S_{1}/J_{G_{1}}}(t)Q_{S_{2}/J_{G_{2}}}(t)$.
Also, $e(S/J_{G}) = Q_{S/J_{G}}(1) =e(S_{1}/J_{G_{1}})e(S_{2}/J_{G_{2}})$.
\end{proof}
 The following is an immediate consequence of previous result.
\begin{corollary}\label{3.5}
Let $G= G_1 \cup \cdots \cup G_r$ be a connected graph. Let $S_i = K[x_j,y_j : j \in V(G_i)]$, for each $i \in [r]$.
Then $$\Hilb_{S/J_G}(t) = (1-t)^{2r-2} \underset{i\in[r]} \prod \Hilb_{S_i/J_{G_i}}(t).$$ In particular,
$\dim(S/J_G)=\displaystyle \sum_{i\in [r]} \dim(S_i/J_{G_i})-2r+2$ and
$e(S/J_G) = \underset{i \in[r]} \prod e(S_i/J_{G_i})$.
\end{corollary}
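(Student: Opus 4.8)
The plan is to prove both the Hilbert series identity and the two ``in particular'' statements by induction on the number $r$ of indecomposable components, using Theorem \ref{3.3} as the engine for a single step. The case $r=1$ is vacuous (here $S=S_1$ and the exponent $2r-2$ vanishes), and the case $r=2$ is precisely Theorem \ref{3.3}; these serve as the base of the induction.

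For the inductive step I would assume the formula for every connected decomposable graph with fewer than $r$ components. The combinatorial input I rely on is that, since $G$ is connected, the indecomposable components are glued along free cut vertices in a tree-like fashion, and no vertex is shared by three or more of the $G_i$ (such a configuration, like $K_{1,3}$ or three triangles meeting at a point, is itself indecomposable, so it cannot occur in the unique decomposition). Granting this, I may relabel so that $G_r$ is a ``leaf'': the induced subgraph $G' = G_1 \cup \cdots \cup G_{r-1}$ is connected, $V(G') \cap V(G_r) = \{v\}$, and $v$ is a free vertex of both $G_r$ and $G'$. The freeness of $v$ in $G'$ is exactly where the no-triple-sharing fact enters, and I expect that confirming such a leaf always exists (equivalently, that one can peel components off one at a time through valid $2$-splits) is the main obstacle; everything after that is bookkeeping.

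With $G = G' \cup G_r$ in hand I would apply Theorem \ref{3.3}, writing $S' = K[x_j, y_j : j \in V(G')]$, to obtain
\[ \Hilb_{S/J_G}(t) = (1-t)^2\, \Hilb_{S'/J_{G'}}(t)\, \Hilb_{S_r/J_{G_r}}(t). \]
Since $G'$ is connected with indecomposable decomposition $G_1 \cup \cdots \cup G_{r-1}$, the induction hypothesis gives $\Hilb_{S'/J_{G'}}(t) = (1-t)^{2(r-1)-2} \prod_{i=1}^{r-1} \Hilb_{S_i/J_{G_i}}(t)$. Substituting and collecting the powers of $(1-t)$ produces the exponent $2 + 2(r-1) - 2 = 2r-2$, which is exactly the claimed series.

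For the dimension and multiplicity I would read off the order of the pole at $t=1$ and the value of the numerator there, just as in the proof of Theorem \ref{3.3}. Writing $\Hilb_{S_i/J_{G_i}}(t) = Q_i(t)/(1-t)^{d_i}$ with $d_i = \dim(S_i/J_{G_i})$ and $Q_i(1) = e(S_i/J_{G_i}) \neq 0$, the formula becomes
\[ \Hilb_{S/J_G}(t) = \frac{\prod_{i=1}^{r} Q_i(t)}{(1-t)^{\sum_{i} d_i - (2r-2)}}. \]
Because $\prod_i Q_i(1) \neq 0$, no cancellation occurs at $t=1$, so the dimension is $\sum_{i} d_i - 2r + 2$ and the multiplicity is $\prod_{i} e(S_i/J_{G_i})$, as asserted.
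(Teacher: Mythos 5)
Your proof is correct and takes essentially the same route the paper intends: the paper offers no argument beyond calling Corollary \ref{3.5} an ``immediate consequence'' of Theorem \ref{3.3}, and that implicit argument is exactly your induction, peeling off one leaf component at a time and applying Theorem \ref{3.3}. The combinatorial facts you flag (no vertex lies in three of the $G_i$, and a leaf component always exists so that the shared vertex remains free in the union of the rest) are genuine prerequisites, but they are standard properties of the decomposition obtained by iterating two-vertex splits, which the paper takes for granted when it asserts the unique decomposition into indecomposable subgraphs.
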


Note that if $G$ is the complete graph on $n$ vertices, then $S/J_G$ is a determinantal ring. It follows from \cite[Corollary 1]{ch} 
that $\Hilb_{S/J_G}(t) = \frac{(n-1)t+1}{ (1-t)^{n+1}}$. Hence by \cite[Proposition 4.1.9]{bh},
$e(S/J_G) = n$, $e_1(S/J_G) = n-1$ and for $2\leq i \leq n$, $e_i(S/J_G) =0$.
\begin{corollary}\label{3.6}
Let $G=P_n$ be the path graph on the vertex set $[n]$. Then $$\Hilb_{S/{J_G}}(t)= \frac{(1+t)^{n-1}}{(1-t)^{n+1}},$$
 $e_i(S/{J_G}) = \binom{n-1}{i}  2^{n-1-i} $ for $0 \leq i \leq n-1$ and $e_n(S/{J_G}) =0$.
\end{corollary}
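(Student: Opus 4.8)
The plan is to exhibit $P_n$ as the union of its edges, apply Corollary \ref{3.5} together with the Hilbert series of $K_2$, and then read the Hilbert coefficients off the resulting rational function.

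First I would decompose the path. Writing $G_i$ for the induced subgraph of $P_n$ on $\{i,i+1\}$, each $G_i$ is a copy of the complete graph $K_2$, and $P_n = G_1 \cup \cdots \cup G_{n-1}$. For consecutive indices $V(G_i)\cap V(G_{i+1}) = \{i+1\}$, and $i+1$ belongs to the single maximal clique of each of the two edges, hence is a free vertex of both; moreover $V(G_i)\cap V(G_j) = \phi$ whenever $|i-j|\geq 2$. Thus $P_n$ satisfies the hypotheses of Corollary \ref{3.5} with $r = n-1$ components, each indecomposable. Since $K_2$ is the complete graph on two vertices, the discussion of complete graphs preceding this statement gives $\Hilb_{S_i/J_{G_i}}(t) = \frac{1+t}{(1-t)^3}$ for every $i$.

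Substituting into Corollary \ref{3.5} yields
$$\Hilb_{S/{J_{P_n}}}(t) = (1-t)^{2(n-1)-2}\prod_{i=1}^{n-1}\frac{1+t}{(1-t)^3} = (1-t)^{2n-4}\,\frac{(1+t)^{n-1}}{(1-t)^{3(n-1)}},$$
and collecting the powers of $(1-t)$, with $2n-4 - 3(n-1) = -(n+1)$, gives the asserted formula $\Hilb_{S/{J_{P_n}}}(t) = \frac{(1+t)^{n-1}}{(1-t)^{n+1}}$. In particular the numerator $(1+t)^{n-1}$ does not vanish at $t=1$, so $\dim(S/J_{P_n}) = n+1$ and $Q_{S/J_{P_n}}(t) = (1+t)^{n-1}$.

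It remains to extract the $e_i$. The key step is to relate the Hilbert coefficients to the numerator $Q(t) = (1+t)^{n-1}$ by expanding $Q$ about $t=1$: writing $(1+t)^{n-1} = (2-(1-t))^{n-1} = \sum_{i=0}^{n-1}(-1)^i\binom{n-1}{i}2^{n-1-i}(1-t)^i$ and dividing by $(1-t)^{n+1}$, each summand becomes $(-1)^i\binom{n-1}{i}2^{n-1-i}(1-t)^{i-(n+1)}$. Using $(1-t)^{-(n+1-i)} = \sum_k \binom{k+n-i}{n-i}t^k$ and comparing the resulting expression for the Hilbert polynomial with the representation $P_M(X) = \sum_{i=0}^{d-1}(-1)^{d-1-i}e_{d-1-i}(M)\binom{X+i}{i}$ recorded in Section 2 identifies, after reindexing, $e_i(S/{J_{P_n}}) = \binom{n-1}{i}2^{n-1-i}$ for $0\leq i\leq n-1$, and $e_n(S/{J_{P_n}}) = 0$ because $(1+t)^{n-1}$ carries no $(1-t)^n$ term. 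Equivalently, one may record the general fact that $e_i = \frac{1}{i!}Q^{(i)}(1)$ for any module with Hilbert series $Q(t)/(1-t)^d$, which reproduces the same values by differentiating $(1+t)^{n-1}$ directly. The Hilbert series itself is immediate from Corollary \ref{3.5}; the only place demanding care is this last paragraph, namely the bookkeeping that converts the expansion of $Q(t)$ at $t=1$ into the coefficients $e_i$ in the normalization of Section 2, where the reindexing and the sign $(-1)^{d-1-i}$ must be tracked correctly.
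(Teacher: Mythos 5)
Your proof is correct and follows essentially the same route as the paper: decompose $P_n$ into $n-1$ copies of $K_2$, apply Corollary \ref{3.5} with the Hilbert series $\frac{1+t}{(1-t)^3}$ of $K_2$, and then read off $e_i(S/J_{P_n})$ from the numerator $Q(t)=(1+t)^{n-1}$ (the paper does this via $e_i = Q^{(i)}(1)/i!$ citing Bruns--Herzog, which is the ``equivalent general fact'' you mention, while your hand expansion of $Q$ about $t=1$ amounts to reproving it). The only cosmetic difference is that the paper dispatches $n=1,2$ separately before decomposing, whereas your edge decomposition is degenerate for $n=1$; this is a trivial case in any event.
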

\begin{proof}
The result follows for $n=1,2$. For $n \geq 3$, $G$ is a decomposable graph with $n-1$ indecomposable subgraphs 
and each indecomposable subgraph of $G$ is $K_2$. Thus, by Corollary \ref{3.5},
$\Hilb_{S/J_G}(t) = (1-t)^{2n-4} \Big(\frac{t+1}{(1-t)^3}\Big)^{n-1}= \frac{(t+1)^{n-1}}{(1-t)^{n+1}}= \frac{Q(t)}{(1-t)^{n+1}}$.
For $0\leq i \leq n-1$, $Q^{(i)}(t) = {i !}\binom{n-1}{i}(1+t)^{n-1-i}$, where $Q^{(i)}(t)$ denotes the $i$-th derivative of $Q(t)$ with respect to $t$. Hence it follows from 
\cite[Proposition 4.1.9]{bh} that for $0 \leq i \leq n-1$, $e_i(S/J_G)= \binom{n-1}{i}2^{n-1-i}$ and $e_n(S/J_G) =0$.
\end{proof}

It follows from the previous Corollary that the Hilbert Polynomial of $P_n$ is
$P_{S/J_{P_n}}(X) = \underset{i \in [n]} \sum (-1)^{n-i} 2^{i-1} \binom{n-1}{i-1}\binom{X+i}{i}$.

Now, we obtain the Hilbert series of $k$-handle lollipop graph.
Let $m \geq 2$ and $K_m$ be the complete graph on $[m]$. Let $L_{m,r_1,\ldots ,r_k}$ denote
the graph obtained by identifying one free vertex of each path $P_{r_1}, \ldots ,P_{r_k}$ with 
$k$ distinct vertices of $K_m$. Such a graph is called $k$-handle lollipop graph.

 For an induced subgraph $H$ of $G$, set 
 $S_H = K[x_j,y_j :j \in V(H)]$.
\begin{proposition}\label{3.7}
Let $G=L_{m,r_{1},...,r_{k}}$ be a $k$-handle lollipop graph. Then 
$$ \Hilb_{S/J_{G}}(t)=\frac{((m-1)t+1)(1+t)^{r-k}}{(1-t)^{m+r-k+1}}, \ where \ r=r_{1}+\cdots +r_{k}. $$ 
In particular, $\dim(S/J_G)= m+r-k+1$ and  $e(S/J_{G})=2^{r-k}m$.
\end{proposition}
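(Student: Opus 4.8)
The plan is to realize the $k$-handle lollipop graph $G=L_{m,r_1,\ldots,r_k}$ as a decomposable graph and apply Corollary~\ref{3.5}. Recall that $G$ is obtained by attaching paths $P_{r_1},\ldots,P_{r_k}$ to $k$ distinct vertices of the complete graph $K_m$, where each attachment identifies a free end of a path with a vertex of $K_m$. First I would argue that the vertices of $K_m$ at which paths are attached are free vertices of the induced copy of $K_m$ in $G$, and that each edge of each attached path is an induced $K_2$ whose shared vertices are free in their respective pieces. Concretely, $G$ decomposes as
\[
G = K_m \cup \underbrace{(K_2 \cup \cdots \cup K_2)}_{\text{the } r-k \text{ path edges}},
\]
so that the indecomposable components are one copy of $K_m$ together with exactly $r-k$ copies of $K_2$ (since a path $P_{r_i}$ contributes $r_i-1$ edges, giving $\sum_i (r_i-1)=r-k$ copies of $K_2$ in total). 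The main bookkeeping step is verifying the decomposition hypothesis, namely that every pairwise intersection of components is either empty or a single free vertex; this follows because the attachment vertices are distinct and each path is a chain of edges whose internal vertices have degree two.

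Once the decomposition is established, I would invoke Corollary~\ref{3.5} with $r$ replaced by the number of indecomposable components, which here is $1+(r-k)$. The corollary gives
\[
\Hilb_{S/J_G}(t) = (1-t)^{2((r-k)+1)-2}\,\Hilb_{S_{K_m}/J_{K_m}}(t)\prod_{j=1}^{r-k}\Hilb_{S_{K_2}/J_{K_2}}(t).
\]
Now I would substitute the known Hilbert series of the complete graph factors. From the determinantal-ring computation recalled after Corollary~\ref{3.5}, one has $\Hilb_{S_{K_m}/J_{K_m}}(t)=\frac{(m-1)t+1}{(1-t)^{m+1}}$, and taking $m=2$ gives $\Hilb_{S_{K_2}/J_{K_2}}(t)=\frac{t+1}{(1-t)^3}$ for each of the $r-k$ path-edge factors.

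Carrying out the substitution, the prefactor contributes $(1-t)^{2(r-k)}$, the $K_m$ factor contributes $\frac{(m-1)t+1}{(1-t)^{m+1}}$, and the $r-k$ copies of $K_2$ contribute $\frac{(1+t)^{r-k}}{(1-t)^{3(r-k)}}$. Collecting exponents of $(1-t)$ in the denominator gives $(m+1)+3(r-k)-2(r-k)=m+r-k+1$, yielding the claimed formula
\[
\Hilb_{S/J_G}(t)=\frac{((m-1)t+1)(1+t)^{r-k}}{(1-t)^{m+r-k+1}}.
\]
The dimension assertion $\dim(S/J_G)=m+r-k+1$ is read off as the order of the pole at $t=1$, and the multiplicity follows by evaluating the numerator $Q(t)=((m-1)t+1)(1+t)^{r-k}$ at $t=1$, giving $e(S/J_G)=Q(1)=((m-1)+1)\cdot 2^{r-k}=2^{r-k}m$, as in the final claim. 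I expect the only genuinely nontrivial step to be the verification that the decomposition really satisfies the free-vertex intersection condition of Corollary~\ref{3.5}; after that the proof is a routine product-and-exponent computation.
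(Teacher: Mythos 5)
Your proof is correct and takes essentially the same route as the paper: the paper decomposes $G = K_m \cup P_{r_1} \cup \cdots \cup P_{r_k}$ and applies Corollary \ref{3.5} together with the path formula of Corollary \ref{3.6}, whereas you refine the decomposition all the way down to $K_m$ and $r-k$ copies of $K_2$ — which is precisely how Corollary \ref{3.6} is itself proved — so the two computations coincide. The free-vertex verification you flag as the nontrivial step is indeed sound, since every vertex of a complete graph (in particular of $K_m$ and of each $K_2$) is free, and the remaining pole-order and $Q(1)$ arguments for the dimension and multiplicity match the paper's.
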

 \begin{proof}
 Note that $ G=K_m\cup P_{r_{1}}\cup P_{r_{2}} \cdots \cup P_{r_{k}}$. Therefore by Corollary \ref{3.5},
 $$ \Hilb_{S/J_{G}}(t)=(1-t)^{2k}\Hilb_{S_{K_m}/J_{K_m}}(t) \underset{i \in [k]}\prod\Hilb_{S_{P_{r_i}}/J_{P_{r_i}}}(t).$$
 By Corollary \ref{3.6}, we have 
 \begin{eqnarray*}
 \Hilb_{S/J_{G}}(t) &= &(1-t)^{2k} \frac{((m-1)t+1)}{(1-t)^{m+1}}\underset {i \in [k]} \prod \frac{(1+t)^{r_i -1}}{(1-t)^{r_i+1}}
 \\&=& \frac{((m-1)t+1)(1+t)^{r-k}}{(1-t)^{m+r-k+1}}.
 \end{eqnarray*}
 \end{proof}
 
 It follows from Proposition \ref{3.7} and   \cite[Proposition 4.1.9]{bh} that if
 $G=L_{m,r_1,\ldots, r_k}$, then for $1\leq i \leq r-k$, $e_i(S/J_G) = (m-1)\binom{r-k}{ i-1}2^{r-k-i+1} + m\binom{r-k}{ i}2^{r-k-i}$,
 $e_{r-k+1}(S/J_G) = m-1$ and $e_j(S/J_G) =0$ for $j \geq r-k+2$.

Let $G$ be a  Cohen-Macaulay closed graph on vertex set  $[n]$. It follows from \cite[Theorem 3.1]{her1} 
that $G=K_{m_1} \cup \cdots \cup K_{m_r}$ for some $m_i \geq 2$.
Therefore by using Corollary \ref{3.5}, we recover the result of Ene-Herzog-Hibi(\cite[page 66]{her1}), \[\Hilb_{S/J_G}(t) = (1-t)^{2r-2}\underset{i \in [r]} \prod 
\Hilb_{S_{K_{m_i}}/J_{K_{m_i}}}(t) = \frac{\underset{i \in[r]} \prod ((m_i-1)t+1)}{(1-t)^{n+1}}\] and $e(S/J_G) = \underset{i \in [r]} \prod m_i$.

\section{Hilbert series of Binomial edge ideal of Join of Graphs }
In this section we compute the Hilbert series of binomial edge ideal of join of two graphs. 
As  consequences of this we obtain the Hilbert series of the binomial edge
ideal of complete $k$-partite graphs, wheel graphs, fan graphs and multi-fan graphs.

\begin{definition}
 Let $H$ and $H'$ be two graphs  with the vertex sets $[p]$ and $[q]$, respectively. The \textit{join} of $H$ and $H'$, denoted by $H*H'$ 
 is the graph with vertex set $[p] \sqcup [q]$ and the edge set $E(H*H')= E(H) \cup E(H') \cup \{\{i,j\}| i \in [p], j \in [q]\}$.
\end{definition}

\noindent
\begin{minipage}{\linewidth}
\begin{minipage}{.45\linewidth}
The graph $G$, given on the right, is the join of the complete graph $K_4$ and the complement graph of the complete graph $K_3$. 
\end{minipage}
\begin{minipage}{.5\linewidth}
\captionsetup[figure]{labelformat=empty}
\begin{figure}[H]
\begin{tikzpicture}[rotate=90, scale=.70]
\draw (4,2)-- (2,2);
\draw (2,2)-- (4,3);
\draw (4,3)-- (2,4);
\draw (2,4)-- (4,2);
\draw (4,4)-- (2,2);
\draw (2,4)-- (4,4);
\draw (2,4)-- (3,4.5);
\draw (3,4.5)-- (3,1.5);
\draw (3,1.5)-- (2,2);
\draw (2,4)-- (2,2);
\draw (2,2)-- (3,4.5);
\draw (2,4)-- (3,1.5);
\draw (4,4)-- (3,4.5);
\draw (4,4)-- (3,1.5);
\draw (4,3)-- (3,1.5);
\draw (4,3)-- (3,4.5);
\draw (4,2)-- (3,1.5);
\draw (4,2)-- (3,4.5);
\begin{scriptsize}
\fill (2,4) circle (1.5pt);
\fill  (3,4.5) circle (1.5pt);
\fill  (3,1.5) circle (1.5pt);
\fill  (2,2) circle (1.5pt);
\fill (4,4) circle (1.5pt);
\fill (4,3) circle (1.5pt);
\fill (4,2) circle (1.5pt);
\end{scriptsize}
\end{tikzpicture}
\caption{$G=K_4*K_3^c$}
\end{figure}
\end{minipage}
\end{minipage}

For rest of the section we use the following notation: Let $H$ and $H'$ be  two graphs on vertex sets $[p]$ and $[q]$, respectively.
Let $G=H*H^{'}$ be the join of $H$ and $H'$. Set $S_{H}=K[x_{i},y_{i}: i\in V(H)]$, $
S_{H^{'}}=K[z_{j},z'_{j}: j\in V(H^{'})]$ and $S=K[x_{i},y_{i},z_{j},z'_{j}: i\in V(H), j\in V(H^{'})]$.

 \begin{theorem}\label{4.3}
 Let $H$ and $H^{'}$ be two disconnected graphs on 
 vertex sets $[p]$ and $[q]$, respectively.
 Let $G=H*H^{'}$ be the join of $H$ and $H^{'}$. Then  
$$ \Hilb_{S/J_{G}}(t) =\Hilb_{S_{H}/J_{H}}(t)+\Hilb_{S_{H^{'}}/J_{H^{'}}}(t) +
\frac{(p+q-1)t+1}{(1-t)^{p+q+1}}-\frac{(p-1)t+1}{(1-t)^{p+1}}-\frac{(q-1)t+1}{(1-t)^{q+1}}.$$
 \end{theorem}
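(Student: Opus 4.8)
The plan is to realise $J_G$ as an intersection of three ideals whose individual and pairwise behaviour is completely understood, and then to compute the Hilbert series by two successive applications of the (always exact) twofold Mayer--Vietoris sequence. I introduce
$$I_1 = (x_i,y_i : i\in V(H)) + J_{H'},\qquad I_2 = (z_j,z'_j : j\in V(H')) + J_H,\qquad I_3 = J_{K_{p+q}},$$
where $K_{p+q}$ is the complete graph on $V(G)=[p]\sqcup[q]$. Here $S/I_1\cong S_{H'}/J_{H'}$, $S/I_2\cong S_H/J_H$, and, since $G$ is connected, $I_3=P_{\phi}(G)$ is the determinantal ideal with $\Hilb_{S/I_3}(t)=\frac{(p+q-1)t+1}{(1-t)^{p+q+1}}$. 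The containment $J_G\subseteq I_1\cap I_2\cap I_3$ is immediate: each $H$-edge generator $x_iy_j-x_jy_i$ lies in $(x_i,y_i)$, each $H'$-edge generator lies in $J_{H'}$, and each cross generator $x_iz'_j-y_iz_j$ lies in $(x_i,y_i)$, so $J_G\subseteq I_1$; symmetrically $J_G\subseteq I_2$; and $J_G\subseteq I_3$ since $E(G)\subseteq E(K_{p+q})$.

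The reverse inclusion is the heart of the argument and is where disconnectedness enters. Since $J_G$ is radical, $J_G=\bigcap_{T\in\mathscr{C}(G)}P_T(G)$, so it suffices to show every $T\in\mathscr{C}(G)$ yields a prime already occurring in $I_1\cap I_2\cap I_3$. If $T\ne\phi$ then the cut point property forces $G[\overline T]$ to be disconnected; but as every vertex of $H$ is adjacent to every vertex of $H'$, the graph $G[\overline T]$ is connected whenever $\overline T$ meets both $V(H)$ and $V(H')$, so necessarily $V(H)\subseteq T$ or $V(H')\subseteq T$. In the first case write $T=V(H)\cup T'$ with $T'\subseteq V(H')$; one checks vertex-by-vertex that $T\in\mathscr{C}(G)$ iff $T'\in\mathscr{C}(H')$, and that $P_{V(H)\cup T'}(G)=(x_i,y_i:i\in V(H))+P_{T'}(H')$. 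Because adding the variable ideal $(x_i,y_i:i\in V(H))$ commutes with intersection (preimages commute with intersection under $S\twoheadrightarrow S_{H'}$) and $J_{H'}=\bigcap_{T'\in\mathscr{C}(H')}P_{T'}(H')$, the primes arising here intersect to exactly $I_1$. Symmetrically $V(H')\subseteq T$ gives the primes intersecting to $I_2$, and $T=\phi$ gives $I_3$. Hence $J_G=I_1\cap I_2\cap I_3$.

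For the Hilbert series I would iterate the sequence $0\to S/(I\cap J)\to S/I\oplus S/J\to S/(I+J)\to 0$ rather than confront the threefold complex directly. Applied to $I_2\cap I_3$ it gives $\Hilb_{S/(I_2\cap I_3)}=\Hilb_{S_H/J_H}(t)+\Hilb_{S/J_{K_{p+q}}}(t)-\Hilb_{S/(I_2+I_3)}(t)$, where $I_2+I_3=(z_j,z'_j:j\in V(H'))+J_{K_p}$, so $S/(I_2+I_3)\cong S_H/J_{K_p}$. Applied again to $I_1\cap(I_2\cap I_3)=J_G$ it gives $\Hilb_{S/J_G}=\Hilb_{S_{H'}/J_{H'}}(t)+\Hilb_{S/(I_2\cap I_3)}(t)-\Hilb_{S/(I_1+I_2\cap I_3)}(t)$. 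The last term is handled by the ideal identity $I_1+(I_2\cap I_3)=I_1+I_3$: every generator of $I_3=J_{K_{p+q}}$ meeting $V(H)$ already lies in $(x_i,y_i:i\in V(H))\subseteq I_1$, while each generator $z_kz'_l-z_lz'_k$ supported on $V(H')$ lies in $(z_j,z'_j:j\in V(H'))\subseteq I_2$ and in $I_3$, hence in $I_2\cap I_3$; thus $I_3\subseteq I_1+(I_2\cap I_3)$ and the reverse is trivial. Therefore $S/(I_1+(I_2\cap I_3))\cong S/(I_1+I_3)\cong S_{H'}/J_{K_q}$, with Hilbert series $\frac{(q-1)t+1}{(1-t)^{q+1}}$.

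Substituting the determinantal series $\Hilb_{S/J_{K_m}}(t)=\frac{(m-1)t+1}{(1-t)^{m+1}}$ for $m=p+q,\,p,\,q$ into the two displayed identities and cancelling yields precisely the claimed formula. The genuinely delicate step is the identification of minimal primes in the second paragraph: the equivalence $T\in\mathscr{C}(G)\Leftrightarrow T'\in\mathscr{C}(H')$ for $T=V(H)\cup T'$ must be checked on each vertex of $T$, and it is exactly at $T'=\phi$ that disconnectedness of $H'$ (and symmetrically of $H$) is indispensable — were $H'$ connected, $P_{V(H)}(G)$ would not be a minimal prime and the clean equality $J_G=I_1\cap I_2\cap I_3$ would fail. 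This is precisely why the connected case demands the separate, longer analysis of Theorem \ref{4.15}; everything else here is routine manipulation of the three determinantal series.
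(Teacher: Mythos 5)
Your proposal is correct and follows essentially the same route as the paper: the same decomposition $J_G = Q_1 \cap Q_2 \cap J_{K_{p+q}}$ (your $I_1 \cap I_2 \cap I_3$) followed by the same two Mayer--Vietoris short exact sequences with the same intermediate ideals $(z_j,z'_j:j\in[q])+J_{K_p}$ and $(x_i,y_i:i\in[p])+J_{K_q}$. The only difference is that you prove the description of $\mathscr{C}(G)$ (and the ideal identity $I_1+(I_2\cap I_3)=I_1+I_3$) directly, where the paper cites \cite[Proposition 4.14]{KM6} and asserts the identity; your inline arguments for both are sound.
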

 \begin{proof}
 It follows from \cite[Proposition 4.14]{KM6} that \[ \mathscr{C}(G)=\{\phi\} \cup \{T\sqcup[q]:T \in \mathscr{C}(H)\} \cup\{[p] \sqcup T': T' \in \mathscr{C}(H')\}. \]
Let  $$Q_{1}=\underset{{\substack{T\in \mathscr{C}(G)\\ [p] \subseteq T}}}{\cap} P_{T}(G)=(x_{i},y_{i} : i\in [p])+J_{H^{'}}$$ and 
 $$ Q_{2} = \underset{{\substack{T\in \mathscr{C}(G) \\ T\neq \phi \ [p] \nsubseteq T}}} {\cap}P_{T}(G)
  =(z_{i},w_{i} : i\in [q])+J_{H}.$$
Now, by \cite[Corollary 3.9]{HH1},
$$ J_G= \underset{T \in \mathscr{C}(G)} \cap P_T(G) = P_{\phi}(G) \cap Q_1 \cap Q_2 = J_{K_{p+q}} \cap Q_1 \cap Q_2.$$
Set $Q_3 = J_{K_{p+q}} \cap Q_2$ and consider the short exact sequence,
 $$ 0\longrightarrow \frac{S}{Q_{3}} \longrightarrow \frac{S}{J_{K_{p+q}}} \oplus \frac{S}{Q_{2}} \longrightarrow \frac{S}{J_{K_{p+q}} +Q_2} \longrightarrow 0 .$$
Note that  $J_{K_{p+q}}+Q_2= (z_{i},w_{i} : i\in [q])+J_{K_{p}}.$
Therefore 
 \begin{align*}
 \Hilb_{S/Q_{3}}(t) &=\Hilb_{S/J_{K_{p+q}}}(t) + \Hilb_{S/Q_2}(t) - \Hilb_{S/({J_{K_p+q}+Q_2})}(t)
 \\ &=\frac{(p+q-1)t+1}{(1-t)^{p+q+1}} + \Hilb_{S_{H}/J_{H}}(t) - \frac{(p-1)t+1}{(1-t)^{p+1}}. 
 \end{align*}
 Now, consider the short exact sequence
 $$ 0\longrightarrow \frac{S}{J_{G}}\longrightarrow \frac{S}{Q_{1}}\oplus \frac{S}{Q_{3}}\longrightarrow \frac{S}{Q_{1}+Q_{3}} \longrightarrow 0 .$$
Note that  $Q_{1}+Q_{3} =(x_{i},y_{i} : i\in [p])+ J_{K_{q}}$. Therefore
 \begin{align*}
 \Hilb_{S/J_{G}}(t) &= \Hilb_{S/Q_{1}}(t)+ \Hilb_{S/Q_{3}}(t)- \Hilb_{S/Q_{1}+Q_{3}}(t) 
 \\ &= \Hilb_{S_{H^{'}}/J_{H^{'}}}(t)+\Hilb_{S_{H}/J_{H}}(t)+ \\  & 
 \ \ \ \ \ \ \frac{(p+q-1)t+1}{(1-t)^{p+q+1}} - \frac{(p-1)t+1}{(1-t)^{p+1}}-\frac{(q-1)t+1}{(1-t)^{q+1}}.
 \end{align*}
 
 \end{proof}
Now we move on to study the join of a disconnected graph with a complete graph. We first identify the subsets with cutpoint property.
 \begin{lemma}\label{4.5}
 Let $H$ be a disconnected graph on the vertex set $[p]$.
 Let $G=H*K_q$ be the join of $H$ and $K_q$. Then $$\mathscr{C}(G)= \{\phi \} \cup \{T\sqcup [q]: T\in \mathscr{C}(H) \}.$$
 \end{lemma}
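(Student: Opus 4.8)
The plan is to work directly from the definition of the cut point property: a nonempty set $T \subseteq V(G)$ lies in $\mathscr{C}(G)$ precisely when every $i \in T$ is a cut vertex of the induced graph $G[\overline{T} \cup \{i\}]$, where here $\overline{T} = ([p] \sqcup [q]) \setminus T$. Since $\phi \in \mathscr{C}(G)$ by definition, I only need to characterize the nonempty members of $\mathscr{C}(G)$. The guiding structural fact is that in $G = H * K_q$ every vertex of $[q]$ is adjacent to all other vertices of $G$ (to all of $[p]$ by the join edges and to all of $[q]\setminus\{i\}$ since $K_q$ is complete); such a vertex is universal.

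First I would show that any nonempty $T \in \mathscr{C}(G)$ must contain all of $[q]$. Suppose instead some $j \in [q]$ lies in $\overline{T}$, and pick any $i \in T$ (possible as $T \neq \phi$). Because $j$ is universal and $j \neq i$, the vertex $j$ remains adjacent to every other vertex of both $G[\overline{T} \cup \{i\}]$ and $G[\overline{T}]$; hence both graphs are connected, so deleting $i$ does not increase the number of components and $i$ fails to be a cut vertex. This contradicts $T \in \mathscr{C}(G)$, forcing $[q] \subseteq T$.

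Next, writing a nonempty $T$ containing $[q]$ as $T = A \sqcup [q]$ with $A \subseteq [p]$, I would split the cut vertex conditions according to where $i$ sits. Observe that $\overline{T} = [p] \setminus A \subseteq [p]$, so $G[\overline{T}] = H[[p]\setminus A]$ and, for $i \in A$, $G[\overline{T} \cup \{i\}] = H[([p]\setminus A) \cup \{i\}]$; thus the requirement for $i \in A$ is exactly that $i$ be a cut vertex of $H[([p]\setminus A)\cup\{i\}]$, which is the cut point condition for $A$ inside $H$. For $i \in [q]$, the vertex $i$ is universal in $G[\overline{T}\cup\{i\}]$, so that graph is connected while deleting $i$ returns $H[[p]\setminus A]$; hence the requirement for $i \in [q]$ is simply that $H[[p]\setminus A]$ be disconnected. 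Therefore $T = A \sqcup [q] \in \mathscr{C}(G)$ if and only if $A$ satisfies the cut point condition in $H$ and $H[[p]\setminus A]$ is disconnected.

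Finally I would reconcile this with the claimed equality. The converse direction is immediate: $T \in \mathscr{C}(G)$ yields the conditions at all $i \in A$, i.e. $A \in \mathscr{C}(H)$ (with $A = \phi$ covered trivially). The forward direction is where the main obstacle lies: I must check that the extra requirement imposed by the $[q]$-vertices, namely that $H[[p]\setminus A]$ be disconnected, is automatic once $A \in \mathscr{C}(H)$. When $A = \phi$ this holds because $H$ itself is disconnected by hypothesis; when $A \neq \phi$, I would pick any $i_0 \in A$, and since $i_0$ is a cut vertex of $H[([p]\setminus A)\cup\{i_0\}]$, deleting it strictly increases the already positive component count, forcing $c_H(A) = c(H[[p]\setminus A]) \geq 2$. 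This closes the equivalence and gives $\mathscr{C}(G) = \{\phi\} \cup \{A \sqcup [q] : A \in \mathscr{C}(H)\}$, which is the asserted description.
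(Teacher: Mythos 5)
Your proof is correct, and it takes a genuinely different route from the paper's: the paper gives no argument at all, disposing of the lemma with the single line that it is immediate from \cite[Proposition 4.5]{KM6}. You instead derive it from the definition of the cut point property, and each step checks out: the universality of every $j\in[q]$ in $G=H*K_q$ correctly forces $[q]\subseteq T$ for nonempty $T\in\mathscr{C}(G)$ (a surviving universal vertex keeps both $G[\bar{T}]$ and $G[\bar{T}\cup\{i\}]$ connected, so no $i\in T$ can be a cut vertex); the split of the cut-vertex conditions for $T=A\sqcup[q]$ into the cut point condition for $A$ inside $H$ (for $i\in A$) and disconnectedness of $H[[p]\setminus A]$ (for $i\in[q]$) is exactly right, since $G[\bar{T}]=H[[p]\setminus A]$ and the $[q]$-vertices are universal in $G[\bar{T}\cup\{i\}]$; and you correctly close the one genuine subtlety, namely that this disconnectedness is automatic --- from the standing hypothesis when $A=\phi$, and from the cut-vertex property of any $i_0\in A$ when $A\neq\phi$, since deleting $i_0$ from the connected-or-not graph $H[([p]\setminus A)\cup\{i_0\}]$ strictly increases a component count that is already at least $1$. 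What your route buys is self-containedness and a clear view of exactly where the hypothesis that $H$ is disconnected enters (without it, $[q]\notin\mathscr{C}(G)$ and the claimed formula fails); it also runs parallel to the paper's own hands-on proof of Lemma \ref{4.11} for $H(*)^{r}K_q$ with $r<q$, with the instructive difference that there the leftover clique $K_{q-r}$ supplies the disconnection after deleting a join vertex, whereas here ($r=q$) it must come from $H$ itself --- a contrast your analysis makes transparent. What the citation buys the paper is brevity only.
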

\begin{proof}
 The proof is immediate from \cite[Proposition 4.5]{KM6}.
\end{proof}
We now compute the Hilbert series:
 \begin{theorem}\label{4.6} 
 Let $H$ be a disconnected graph on the vertex set $[p]$.
 Let $G=H*K_{q}$ be the join of $H$ and the complete graph $K_q$. Then 
 $$ \Hilb_{S/J_{G}}(t)=\Hilb_{S_{H}/J_{H}}(t)+ \frac{(p+q-1)t+1}{(1-t)^{p+q+1}}  -\frac{(p-1)t+1}{(1-t)^{p+1}}. $$
 \end{theorem}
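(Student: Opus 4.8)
The plan is to mimic the proof of Theorem~\ref{4.3}, noting that the argument here is shorter because $K_q$ is connected, so a single short exact sequence suffices. First I would apply Lemma~\ref{4.5} to write $\mathscr{C}(G)=\{\phi\}\cup\{T\sqcup[q]:T\in\mathscr{C}(H)\}$. Since $G=H*K_q$ is connected, the prime attached to $\phi$ is $P_{\phi}(G)=J_{K_{p+q}}$, the determinantal ideal of the complete graph on all $p+q$ vertices. I would then split the decomposition $J_G=\underset{T'\in\mathscr{C}(G)}{\cap}P_{T'}(G)$ into this single prime together with $Q_1:=\underset{T\in\mathscr{C}(H)}{\cap}P_{T\sqcup[q]}(G)$, so that $J_G=J_{K_{p+q}}\cap Q_1$.

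The first substantive step is to identify $Q_1$. For $T\in\mathscr{C}(H)$ the complement $\overline{T\sqcup[q]}$ in $[p+q]$ is exactly $[p]\setminus T$, and $G[\overline{T\sqcup[q]}]=H[\overline{T}]$; hence the connected components and their cliques coincide with those governing $P_T(H)$. This yields $P_{T\sqcup[q]}(G)=(z_i,w_i:i\in[q])+P_T(H)$. Intersecting over $T\in\mathscr{C}(H)$ and pulling the common linear part $(z_i,w_i:i\in[q])$ out of the intersection gives $Q_1=(z_i,w_i:i\in[q])+J_H$, using the minimal primes decomposition $J_H=\underset{T\in\mathscr{C}(H)}{\cap}P_T(H)$. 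In particular $S/Q_1\cong S_H/J_H$, so $\Hilb_{S/Q_1}(t)=\Hilb_{S_H/J_H}(t)$.

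Next I would exploit the short exact sequence
$$0\longrightarrow \frac{S}{J_G}\longrightarrow \frac{S}{J_{K_{p+q}}}\oplus\frac{S}{Q_1}\longrightarrow\frac{S}{J_{K_{p+q}}+Q_1}\longrightarrow 0,$$
reading Hilbert series off additively. The term $\Hilb_{S/J_{K_{p+q}}}(t)=\frac{(p+q-1)t+1}{(1-t)^{p+q+1}}$ is the known determinantal series. The computation requiring care is the sum ideal $J_{K_{p+q}}+Q_1$: adjoining the linear forms $(z_i,w_i:i\in[q])$ annihilates precisely those binomial generators of $J_{K_{p+q}}$ that meet the vertex set $[q]$, leaving the generators of $J_{K_p}$ on $[p]$, while $J_H\subseteq J_{K_p}$ is absorbed. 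Thus $J_{K_{p+q}}+Q_1=J_{K_p}+(z_i,w_i:i\in[q])$ and $\Hilb_{S/(J_{K_{p+q}}+Q_1)}(t)=\frac{(p-1)t+1}{(1-t)^{p+1}}$. Substituting the three series into the additivity relation produces the claimed formula.

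The main obstacle is exactly this last identification of $J_{K_{p+q}}+Q_1$: one must verify at the level of generators that killing the $[q]$-variables collapses the complete-graph binomial ideal down to $J_{K_p}$ and simultaneously swallows $J_H$, so that the quotient is the determinantal ring of $K_p$. Once that is settled, the remainder is formal manipulation of the short exact sequence and substitution of known Hilbert series.
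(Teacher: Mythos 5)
Your proposal is correct and follows essentially the same route as the paper: use Lemma \ref{4.5} to write $J_G = J_{K_{p+q}} \cap Q$ with $Q = (z_j,w_j : j\in[q]) + J_H$, then apply the standard short exact sequence $0 \to S/J_G \to S/J_{K_{p+q}} \oplus S/Q \to S/(J_{K_{p+q}}+Q) \to 0$ together with the identification $J_{K_{p+q}}+Q = (z_j,w_j : j\in[q]) + J_{K_p}$. The only difference is that you spell out the verifications (that $Q$ collapses to $(z_j,w_j)+J_H$ and that the sum ideal collapses to the determinantal ideal of $K_p$) which the paper asserts without detail; these check out.
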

 \begin{proof}
 By \cite[Corollary 3.9]{HH1}, $J_G = \underset{T \in \mathscr{C}(G)} \cap P_T(G)$.
 Let $Q = \underset{{\substack{T\in \mathscr{C}(G)\\ [q] \subseteq T}}} \cap P_{T}(G)=(z_{j},w_{j}: j\in [q])+J_{H}$.
 Now it follows from Lemma \ref{4.5} that $J_G = P_{\phi}(G) \cap Q = J_{K_{p+q}} \cap Q$.
 Consider  the short exact sequence
$$ 0\longrightarrow \frac{S}{J_{G}}\longrightarrow \frac{S}{J_{K_{p+q}}}\oplus \frac{S}{Q}\longrightarrow \frac{S}{J_{K_{p+q}} +Q} \longrightarrow 0.$$
Note that $J_{K_{p+q}} +Q = (z_j,w_j : j \in [q] ) + J_{K_p}$. Therefore 
\begin{align*}
\Hilb_{S/J_{G}}(t) &=  \Hilb_{S/Q}(t)+\Hilb_{S/ J_{K_{p+q}}}(t)- \Hilb_{S/({J_{K_{p+q}} +Q})}(t) 
\\ &= \Hilb_{S_{H}/J_{H}}(t)+ \frac{(p+q-1)t+1}{(1-t)^{p+q+1}}- \frac{(p-1)t+1}{(1-t)^{p+1}}.
\end{align*}
\end{proof}

If $H$ is a disconnected graph, then by taking $q=1$ in Theorem \ref{4.6}, we obtain the 
Hilbert series of the cone of $H$. Of particular interest is the case of multi-fan graph. 
Let $P_{p_1},\ldots, P_{p_r}$ denote paths on vertices $p_1, \ldots, p_r$ respectively.
The the graph $\{v\} * (\sqcup_{i=1}^r P_{p_i})$
is called a multi-fan graph, denoted by $M_{p_1, \ldots, p_r}.$
As a consequence of Theorem \ref{4.6}, we obtain the Hilbert series of binomial edge ideal of the
multi-fan graphs. 

\begin{corollary}
Let $G=M_{p_1,\ldots,p_r}$ be  the multi-fan graph with $r \geq 2$.  Let $p=\underset{i\in [r]}\sum p_{i}$. Then
$$\Hilb_{S/J_G} = \frac{(1+t)^{p-r}}{(1-t)^{p+r}} + \frac{(p-1)t^2 +2t}{(1-t)^{p+2}}.$$ In particular,
$ \dim(S/J_{G}) =p+r$ and 
\[
e(S/J_{G}) = 
     \begin{cases}
       {2}^{p-r} &\quad\text{if r}> 2, \\
       {2}^{p-r}+{p+1} &\quad\text{if r} = 2.
     \end{cases}
\]
\end{corollary}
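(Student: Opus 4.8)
The plan is to recognize the multi-fan graph as a cone and reduce everything to Theorem \ref{4.6}. Since $r \geq 2$, the graph $H = \sqcup_{i=1}^{r} P_{p_i}$ is disconnected, and by definition $M_{p_1,\ldots,p_r} = \{v\} * H = H * K_1$. So first I would apply Theorem \ref{4.6} with $H$ as above and $q = 1$; substituting $q=1$ (so that $p+q = p+1$) into the formula there gives
$$\Hilb_{S/J_G}(t) = \Hilb_{S_H/J_H}(t) + \frac{pt+1}{(1-t)^{p+2}} - \frac{(p-1)t+1}{(1-t)^{p+1}}.$$
This isolates the only genuinely graph-dependent quantity, namely $\Hilb_{S_H/J_H}(t)$.

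The one ingredient not covered by a cited result is the Hilbert series of the \emph{disconnected} graph $H$. I would stress that Corollary \ref{3.5} does not apply here, since it concerns connected unions glued along free vertices rather than disjoint unions. Instead, because $H$ has no edges between distinct paths, the binomial edge ideal splits into disjoint variable sets and $S_H/J_H \cong \bigotimes_{i=1}^{r} \big(S_{P_{p_i}}/J_{P_{p_i}}\big)$ as graded $K$-algebras; the Hilbert series of a tensor product over $K$ is the product of the factors' Hilbert series. Feeding in the path formula from Corollary \ref{3.6} then yields
$$\Hilb_{S_H/J_H}(t) = \prod_{i=1}^{r} \frac{(1+t)^{p_i-1}}{(1-t)^{p_i+1}} = \frac{(1+t)^{p-r}}{(1-t)^{p+r}},$$
using $\sum_i (p_i-1) = p-r$ and $\sum_i (p_i+1) = p+r$. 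This is exactly the first summand of the claimed expression, so what remains is a routine simplification: placing the two rational terms over the common denominator $(1-t)^{p+2}$ and expanding the numerator $pt+1 - ((p-1)t+1)(1-t) = (p-1)t^2 + 2t$ produces the second summand $\dfrac{(p-1)t^2+2t}{(1-t)^{p+2}}$ and completes the formula for $\Hilb_{S/J_G}$.

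For the dimension and multiplicity I would clear denominators and write the total Hilbert series as $Q(t)/(1-t)^{p+r}$, where
$$Q(t) = (1+t)^{p-r} + \big((p-1)t^2 + 2t\big)(1-t)^{r-2}.$$
The dimension equals the order of the pole at $t=1$, hence equals $p+r$ precisely when $Q(1) \neq 0$, and the multiplicity is $e(S/J_G) = Q(1)$. The only step needing care — and the main (if minor) obstacle — is the behaviour of the factor $(1-t)^{r-2}$ at $t=1$, which forces the dichotomy in the statement: when $r > 2$ it vanishes, so $Q(1) = 2^{p-r}$, whereas when $r = 2$ it equals $1$, giving $Q(1) = 2^{p-2} + (p-1) + 2 = 2^{p-r} + p + 1$. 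In either case $Q(1) \neq 0$ (note $p \geq r$, so $2^{p-r} \geq 1$), which confirms $\dim(S/J_G) = p+r$ and reads off the two-case multiplicity exactly as claimed.
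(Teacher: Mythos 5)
Your proof is correct and follows essentially the same route as the paper: apply Theorem \ref{4.6} with $q=1$ to the disconnected graph $H=\sqcup_{i\in[r]}P_{p_i}$, compute $\Hilb_{S_H/J_H}$ as the product of the path Hilbert series from Corollary \ref{3.6}, and simplify. Your two additions --- the explicit tensor-product justification that Hilbert series multiply over disjoint unions (where Corollary \ref{3.5} indeed does not apply), and the derivation of $\dim$ and $e$ from $Q(1)$ with the $r=2$ versus $r>2$ dichotomy --- are details the paper leaves implicit, and both are handled correctly.
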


\begin{proof}
Let $H = \underset{ i \in [r]} \sqcup P_{p_i}$. Since $r \geq 2$, by Theorem \ref{4.6} and Corollary \ref{3.6}
 \begin{align*}
\Hilb_{S/J_{G}}(t) &=\Hilb_{S_{H}/J_{H}}(t)+\frac{pt+1}{(1-t)^{p+2}} - \frac{(p-1)t+1}{(1-t)^{p+1}}
\\ &= \prod_{i=1}^r \Hilb_{S_{P_{p_{i}}}/J_{P_{p_{i}}}}(t) +\frac{(p-1)t^2+2t}{(1-t)^{p+2}}
\\ &= \prod_{i=1}^{r} \frac{(1+t)^{p_{i}-1}}{(1-t)^{p_{i}+1}} +\frac{(p-1)t^2+2t}{(1-t)^{p+2}}
\\ &= \frac{(1+t)^{p-r}}{(1-t)^{p+r}}+\frac{(p-1)t^2+2t}{(1-t)^{p+2}}. 
\end{align*}
\end{proof} 
We proceed to study the connected case now. To understand $H*K_q$, where
$H$ is a connected graph, we need more tools. We first treat the case $q=1$.
\begin{theorem}\label{4.9}
Let $H$ be a connected graph on $[p]$ and $G=H*\{v\}$. Then 
 $$ \Hilb_{S/J_{G}}(t) =\Hilb_{S_{H}/J_{H}}(t)+ \frac{(p-1)t^2+2t}{(1-t)^{p+2}} .$$
 In particular,\[   
e(S/J_{G}) = 
     \begin{cases}
     {e(S}_{H}/J_{H}) &\quad\text{if dim(S}_{H}/J_{H})\geq p+3, \\
       {p+1}+{e(S}_{H}/J_{H}) &\quad\text{if dim(S}_{H}/J_{H})= p+2, \\
       {p+1} &\quad\text{if dim(S}_\text{H}/J_\text{H})=p+1.
     \end{cases}
\]
\end{theorem}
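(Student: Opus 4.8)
The plan is to decompose $J_G$ along the apex vertex $v$ using Ohtani's lemma, which collapses the problem to the complete graph $K_{p+1}$ and to $H$ itself, then to run a short exact sequence exactly as in the proofs of Theorems \ref{4.3} and \ref{4.6}, and finally to read off the multiplicity by comparing orders of the pole at $t=1$.

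First I would record the two graphs attached to $v$. Writing $x_v,y_v$ for the pair of variables corresponding to $v$, so that $S=S_H[x_v,y_v]$, note that $v$ is adjacent to every vertex of $H$, hence $N_G(v)=V(H)$, so $G_v=K_{p+1}$, while $G\setminus v=H$. By Ohtani's lemma \cite{oh} (which uses the graph $G_v$ introduced in Section 2) this gives
\[
J_G = J_{K_{p+1}} \cap \bigl( (x_v,y_v)+J_H \bigr).
\]
Set $Q=(x_v,y_v)+J_H$. Since $Q$ is obtained from $J_H$ by adjoining two new variables as generators, $S/Q\cong S_H/J_H$ as graded modules, so $\Hilb_{S/Q}(t)=\Hilb_{S_H/J_H}(t)$.

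Next I would feed this into the short exact sequence
\[
0\longrightarrow \frac{S}{J_G}\longrightarrow \frac{S}{J_{K_{p+1}}}\oplus\frac{S}{Q}\longrightarrow \frac{S}{J_{K_{p+1}}+Q}\longrightarrow 0.
\]
Because $J_H\subseteq J_{K_{p+1}}$, the last term simplifies to $J_{K_{p+1}}+Q=(x_v,y_v)+J_{K_{p+1}}$, and killing $x_v,y_v$ turns $J_{K_{p+1}}$ into $J_{K_p}$ on the vertex set $[p]$, so $S/(J_{K_{p+1}}+Q)\cong S_H/J_{K_p}$. Using the complete-graph formula from Section 3 we have $\Hilb_{S/J_{K_{p+1}}}(t)=\frac{pt+1}{(1-t)^{p+2}}$ and $\Hilb_{S_H/J_{K_p}}(t)=\frac{(p-1)t+1}{(1-t)^{p+1}}$. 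Additivity of Hilbert series along the sequence then yields
\[
\Hilb_{S/J_G}(t)=\Hilb_{S_H/J_H}(t)+\frac{pt+1}{(1-t)^{p+2}}-\frac{(p-1)t+1}{(1-t)^{p+1}},
\]
and a routine simplification of the two rational terms over the common denominator $(1-t)^{p+2}$ collapses them to $\frac{(p-1)t^2+2t}{(1-t)^{p+2}}$, which is the claimed Hilbert series.

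For the multiplicity I would write $\Hilb_{S_H/J_H}(t)=\frac{Q_H(t)}{(1-t)^{d}}$ in lowest terms, where $d=\dim(S_H/J_H)$ and $Q_H(1)=e(S_H/J_H)\ne 0$, and compare the pole orders at $t=1$ of the two summands: the second summand $\frac{(p-1)t^2+2t}{(1-t)^{p+2}}$ has pole order $p+2$ and numerator value $p+1$ at $t=1$. Since $H$ is connected we always have $d\ge p+1$ (from $\dim(S_H/P_\phi(H))=\dim(S_H/J_{K_p})=p+1$), so exactly the three listed dimension ranges occur. Putting everything over $(1-t)^{\max(d,p+2)}$ and evaluating the numerator at $t=1$: if $d\ge p+3$ the first summand dominates and $e(S/J_G)=e(S_H/J_H)$; if $d=p+2$ the two pole orders coincide and the leading coefficients add, giving $e(S/J_G)=e(S_H/J_H)+p+1$; if $d=p+1$ the second summand dominates and $e(S/J_G)=p+1$. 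The main point requiring care is this last bookkeeping step — verifying that the three ranges are exhaustive and that the coefficients genuinely add (rather than cancel) when $d=p+2$; everything else is the exact-sequence machinery already used above together with the complete-graph Hilbert series.
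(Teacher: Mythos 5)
Your proposal follows the paper's own proof step for step: the same appeal to Ohtani's lemma at the apex vertex $v$ (with $G_v=K_{p+1}$, $G\setminus v=H$), the same Mayer--Vietoris short exact sequence, the same simplification over $(1-t)^{p+2}$, and a pole-order comparison for the multiplicity that the paper merely asserts ``follows directly'' (your remark that the leading coefficients cannot cancel when $\dim(S_H/J_H)=p+2$, since both are positive, is exactly the point that needs saying there).

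There is, however, one genuine omission: the lemma you invoke, \cite[Lemma 4.8]{oh}, applies only when $v$ is \emph{not} a free vertex of $G$. If $H=K_p$, then $G=K_{p+1}$ and $v$ is free, so your first step uses the lemma outside its hypotheses. This is precisely why the paper opens its proof with ``If $H=K_p$, then the result is immediate. Assume that $H\neq K_p$, so that $v$ is not a free vertex in $G$.'' The repair is cheap and can be done in either of two ways: (i) dispose of $H=K_p$ by hand, checking directly that
\[
\frac{(p-1)t+1}{(1-t)^{p+1}}+\frac{(p-1)t^2+2t}{(1-t)^{p+2}}=\frac{pt+1}{(1-t)^{p+2}}=\Hilb_{S/J_{K_{p+1}}}(t);
\]
or (ii) observe that for a free vertex $v$ one has $G_v=G$ and $J_G\subseteq (x_v,y_v)+J_{G\setminus v}$ (every generator of $J_G$ either involves an edge at $v$, hence lies in $(x_v,y_v)$, or lies in $J_{G\setminus v}$), so the intersection formula $J_G=J_{G_v}\cap\bigl((x_v,y_v)+J_{G\setminus v}\bigr)$ holds trivially in that case as well --- but this is an observation you must supply, not something the cited lemma gives you. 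With that case handled, the rest of your argument (the identification $J_{K_{p+1}}+Q=(x_v,y_v)+J_{K_p}$, the Hilbert series bookkeeping, and the exhaustiveness of the three dimension ranges via $\dim(S_H/J_H)\geq p+1$ for connected $H$) is correct and matches the paper.
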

\begin{proof}
 If $H=K_p$, then the result is immediate. Assume that $H \neq K_p$, 
 so that $v$ is not a free vertex in $G$. By \cite[Lemma 4.8]{oh}, 
 $J_G = J_{G_v} \cap ((x_v,y_v) + J_{G \setminus v})$. Note that 
 $G_v = K_{p+1}$ and ${G \setminus v}= H$. It follows from the short exact sequence
 $$ 0\longrightarrow \frac{S}{J_{G}}\longrightarrow 
 \frac{S}{J_{K_{p+1}}}\oplus \frac{S}{(x_v,y_v)+J_H}\longrightarrow \frac{S}{(x_v,y_v)+J_{K_{p}}} \longrightarrow 0$$
 that
 \begin{align*}
 \Hilb_{S/J_{G}}(t) &=\Hilb_{S_{H}/J_{H}}(t)+\frac{pt+1}{(1-t)^{p+2}}-\frac{(p-1)t+1}{(1-t)^{p+1}} 
 \\ &=\Hilb_{S_{H}/J_{H}}(t)+\frac{(p-1)t^2+2t}{(1-t)^{p+2}}. 
 \end{align*}
 The formula for the multiplicity follows directly from the Hilbert series expression.
 \end{proof}
 
As an immediate consequence, we have:
 \begin{corollary}
 Let $G=W_{p+1}$ be the wheel graph on $[p+1]$ with $p \geq 3$. Then 
 $$ \Hilb_{S/J_{G}}(t)=\frac{(1-t^2)(1+t)^{p-1}+(p-1)t^{p}+t^{p+1}}{(1-t)^{p+1}}+\frac{(p-1)t^2+2t}{(1-t)^{p+2}}.$$
 In particular, $\dim(S/J_G)=p+2$ and $e(S/J_G)=p+1$.
 \end{corollary}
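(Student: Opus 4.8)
The plan is to realize the wheel as a cone over a cycle and then invoke Theorem \ref{4.9} directly. By definition $W_{p+1}$ consists of a cycle $C_p$ on the $p$ rim vertices together with a hub vertex $v$ adjacent to every rim vertex, so that $W_{p+1}=C_p*\{v\}$. Since $p\geq 3$, the rim graph $C_p$ is connected and has exactly $p$ vertices, so Theorem \ref{4.9} applies with $H=C_p$ and yields
$$\Hilb_{S/J_{W_{p+1}}}(t)=\Hilb_{S_{C_p}/J_{C_p}}(t)+\frac{(p-1)t^2+2t}{(1-t)^{p+2}}.$$
It therefore remains only to insert the Hilbert series of the binomial edge ideal of the cycle.

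This is exactly the computation of Zafar and Zahid in \cite{SZ}, which gives
$$\Hilb_{S_{C_p}/J_{C_p}}(t)=\frac{(1-t^2)(1+t)^{p-1}+(p-1)t^p+t^{p+1}}{(1-t)^{p+1}}.$$
Substituting this into the displayed identity immediately produces the claimed formula for $\Hilb_{S/J_{W_{p+1}}}(t)$, with no further algebra needed.

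For the dimension and multiplicity I would first record that the numerator $N(t)=(1-t^2)(1+t)^{p-1}+(p-1)t^p+t^{p+1}$ of the cycle's Hilbert series satisfies $N(1)=(p-1)+1=p\neq 0$; hence $(1-t)$ does not divide $N(t)$, the pole at $t=1$ has order exactly $p+1$, and $\dim(S_{C_p}/J_{C_p})=p+1$. In particular one is in the third branch of the multiplicity formula of Theorem \ref{4.9} (the case $\dim(S_{C_p}/J_{C_p})=p+1$), which forces $e(S/J_{W_{p+1}})=p+1$. Alternatively, placing the two summands over the common denominator $(1-t)^{p+2}$ produces the numerator $N(t)(1-t)+(p-1)t^2+2t$, whose value at $t=1$ equals $(p-1)+2=p+1\neq 0$; this confirms both $\dim(S/J_{W_{p+1}})=p+2$ and $e(S/J_{W_{p+1}})=p+1$ in a single stroke.

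The only genuine input beyond Theorem \ref{4.9} is the cycle formula from \cite{SZ}, so there is essentially no obstacle here. The single point warranting a word of care is the verification that $\dim(S_{C_p}/J_{C_p})=p+1$, so that the correct branch of the multiplicity formula in Theorem \ref{4.9} is selected; this is settled at once by the observation $N(1)=p\neq 0$.
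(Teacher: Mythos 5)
Your proposal is correct and follows exactly the paper's route: write $W_{p+1}=C_p*\{v\}$, apply Theorem \ref{4.9}, and substitute the cycle Hilbert series from \cite{SZ}. The only difference is that you spell out the dimension/multiplicity verification (checking $N(1)=p\neq 0$ to select the correct branch of Theorem \ref{4.9}), which the paper leaves implicit; this is a welcome but minor elaboration, not a different argument.
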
 
\begin{proof}
Let $H = C_p $ be the cycle graph on $[p]$. Note that $G=C_{p}*\{v\}$. Therefore by  Theorem \ref{4.9},
$$ \Hilb_{S/J_{G}}(t)=\Hilb_{S_{H}/J_{H}}(t)+\frac{(p-1)t^2+2t}{(1-t)^{p+2}}.$$
Now the result follows from \cite[Theorem 10]{SZ}.
\end{proof}

We now define a new product which is required in the computation of the
Hilbert series of join of two arbitrary graphs.

\begin{minipage}{0.5\linewidth}
Let $H$ be a graph on $[p]$ and $H'$ be a graph on $[q]$. 
Let $v_{1},...,v_{r}$ be vertices of $H'$ with $1 \leq r \leq q$. The graph with vertex set $[p]\sqcup [q]$ 
and edge set \[E(H)\sqcup \left\{ (u,v_{i}):u \in V(H),i=1,...,r \right\}\sqcup E(H')\] is denoted by $H (*)^{r} H'$. 
Note that if $r= q$, then $H(*)^q H'$ is the join of $H$ and $H'$. For example, the graph $G$ given on the right side is the graph $P_3(*)^2K_4$.
\end{minipage}
\begin{minipage}{.45\linewidth}
\captionsetup[figure]{labelformat=empty}
\begin{figure}[H]
\begin{tikzpicture}[scale=1.7]
\draw (1,4.5)-- (1,3.5);
\draw (1,3.5)-- (1,2.5);
\draw (1,2.5)-- (2,3);
\draw (2,3)-- (1,3.5);
\draw (1,3.5)-- (2,4);
\draw (2,4)-- (1,2.5);
\draw (1,4.5)-- (2,3);
\draw (2,4)-- (1,4.5);
\draw (2,4)-- (3,4);
\draw (3,4)-- (3,3);
\draw (3,3)-- (2,3);
\draw (2,4)-- (2,3);
\draw (2,3)-- (3,4);
\draw (2,4)-- (3,3);
\begin{scriptsize}
\fill (2,4) circle (1.5pt);
\draw(2.2,4.36) node {$v_1$};
\fill(3,4) circle (1.5pt);
\fill (3,3) circle (1.5pt);
\fill  (2,3) circle (1.5pt);
\draw(2.18,2.76) node {$v_2$};
\fill(1,4.5) circle (1.5pt);
\fill(1,3.5) circle (1.5pt);
\fill (1,2.5) circle (1.5pt);
\end{scriptsize}
\end{tikzpicture}
\caption{$ G $}
\end{figure}
\end{minipage}

Our next aim is to compute the Hilbert series of $H(*)^r K_q.$ First we identify the sets with cut point property.

\begin{lemma}\label{4.11}
Let $H$ be a graph on $[p]$ and $G=H (*)^{r} K_{q},$ where $q\geq 2$ and $1 \leq r <q$. Then
\[ \mathscr{C}(G)=\{\phi \} \cup \{ T\sqcup \{v_{1},...,v_{r} \} : T\in \mathscr{C}(H) \}.\]
\end{lemma}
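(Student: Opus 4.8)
The plan is to exploit the two genuinely different kinds of vertices coming from $K_q$. Write $A=\{v_1,\ldots,v_r\}$ and $B=V(K_q)\setminus A=\{w_{r+1},\ldots,w_q\}$, so that $|B|=q-r\geq 1$ because $r<q$. I would first record the structural observations that drive everything: (i) each $v_i\in A$ is a \emph{universal} vertex of $G$, being adjacent to all of $V(H)$ by the $(*)^r$ construction and to every other vertex of $K_q$ since $K_q$ is complete; (ii) every vertex of $B$ has its whole neighborhood inside the clique $V(K_q)$, and once $A$ is deleted there are no edges between the $H$-part and $B$, so $G[\bar A]=H\sqcup K_{q-r}$ is a disjoint union. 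I would also isolate the elementary fact that a vertex all of whose neighbors form a clique can never be a cut vertex, since its neighbors all lie in a single connected component after its removal.

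For the inclusion $\supseteq$, given $T'\in\mathscr{C}(H)$ (possibly $\phi$), set $T=T'\sqcup A$ and note $G[\bar T]=H[V(H)\setminus T']\sqcup K_{q-r}$, so $c_G(T)=c_H(T')+1$. I then verify each $i\in T$ is a cut vertex of $G[\bar T\cup\{i\}]$. If $i=v_j\in A$, adding the universal vertex $v_j$ back fuses the blocks of $H[V(H)\setminus T']$ and the block $K_{q-r}$ into one component, dropping the count from $c_H(T')+1\geq 2$ to $1$; here the hypothesis $r<q$ is essential, as it is what guarantees the extra $K_{q-r}$-block, and one also uses that $T'\in\mathscr{C}(H)$ forces $V(H)\setminus T'\neq\phi$ so that $c_H(T')\geq 1$. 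If instead $i\in T'\subseteq[p]$, then $G[\bar T\cup\{i\}]=H[(V(H)\setminus T')\cup\{i\}]\sqcup K_{q-r}$, whence $i$ is a cut vertex of $G[\bar T\cup\{i\}]$ precisely when it is a cut vertex of $H[(V(H)\setminus T')\cup\{i\}]$, which holds because $T'\in\mathscr{C}(H)$.

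For the reverse inclusion $\subseteq$, take $T\neq\phi$ with the cut point property and argue in three steps. (a) $T\cap B=\phi$: any $w\in T\cap B$ would, inside $G[\bar T\cup\{w\}]$, have all its neighbors among the surviving clique vertices of $V(K_q)$, so by the elementary fact above $w$ is not a cut vertex, a contradiction. (b) $A\subseteq T$: if some $v_j\in\bar T$, then universality of $v_j$ makes both $G[\bar T]$ and every $G[\bar T\cup\{i\}]$ connected, so no $i\in T$ can be a cut vertex, contradicting $T\neq\phi$. Thus $T=T'\sqcup A$ with $T'=T\setminus A\subseteq[p]$. (c) $T'\in\mathscr{C}(H)$: for each $i\in T'$ the same disjoint-union computation shows $i$ is a cut vertex of $G[\bar T\cup\{i\}]$ iff it is a cut vertex of $H[(V(H)\setminus T')\cup\{i\}]$, so the cut point property of $T$ in $G$ transfers to that of $T'$ in $H$.

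I expect the reverse inclusion, and specifically steps (a) and (b), to be the main obstacle, since this is where the hypothesis $r<q$ (equivalently $B\neq\phi$) and the universal-versus-clique-only distinction genuinely enter, and where the degenerate cases ($T'=\phi$, or $V(H)\setminus T'=\phi$) must be checked so that the extra block $K_{q-r}$ really forces every apex vertex to separate. The forward inclusion and the component bookkeeping in step (c) are then routine once the disjoint-union description of $G[\bar T]$ is in hand.
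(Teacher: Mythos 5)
Your proof is correct and takes essentially the same route as the paper: force $\{v_1,\ldots,v_r\}\subseteq T$ using that these vertices are universal in $G$, and transfer the cut point property between $G$ and $H$ via the decomposition $G[\bar{T}\cup\{u\}]=H[\bar{T'}\cup\{u\}]\sqcup K_{q-r}$. The only real difference is your step (a), which explicitly rules out vertices of $V(K_q)\setminus\{v_1,\ldots,v_r\}$ from $T$---a point the paper's proof passes over silently (its displayed equality for $G[\bar{T}\cup\{u\}]$ tacitly assumes $u\in V(H)$)---so your write-up is, if anything, slightly more complete.
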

\begin{proof}
Let $T\in \mathscr{C}(G)$ and $T\neq \phi$. First note that $\{v_1,...,v_r\} \subseteq T$. For if $v_{i}\notin T$ for some $i$, then $G_{\bar{T}}$ is a connected graph, 
which contradicts the fact that $T\in \mathscr{C}(G)$. Let $T'=T \setminus \{v_{1},...,v_{r}\}$. 
We need to show that $T^{'}\in \mathscr{C}(H)$. If $T'=\phi$, then we are through. Assume that $T^{'}\neq \phi$. If
$u\in T^{'}$, then  $G[{\bar{T} \cup \{ u \}}]=H[{\bar{T'}\cup \{ u \}}] \sqcup K_{q-r}$. Since $u$ is a cut vertex of 
$G[{\bar{T} \cup \{ u \}}]$, it is a cut vertex of $H[{\bar{T'}\cup \{ u \}}]$. Thus $T' \in \mathscr{C}(H)$.
 
Conversely, let $T=T' \sqcup \{v_{1},...,v_{r} \}$, where $T'\in \mathscr{C}(H)$. 
Note that for $ i \in [r]$, $G[{\bar{T}\cup \{ v_{i} \}}]=H[{\bar{T'}}] (*)^{1} K_{q-r+1}$. So $v_{i}$ is a cut vertex of $ G[{\bar{T}\cup \{ v_{i} \}}]$.
If $T' = \phi$, then we are done. Assume that $T' \neq \phi$ and let $ u \in T'$. Since
$G[{\bar{T}\cup \{ u \}}]=H[{\bar{T'}\cup \{ u \}}] \sqcup K_{q-r}$, $u$ is a cut vertex of $G[{\bar{T}\cup \{ u \}}]$ and $T\in \mathscr{C}(G)$.
Hence the result follows.
\end{proof}
\begin{theorem}\label{4.12}
Let $H$ be a graph on $[p]$ and $G=H (*)^{r} K_{q}$ be the graph on vertex set $[p]\sqcup [q]$ with $q\geq 2$ and $1 \leq r <q$. Then
$$ \Hilb_{S/J_{G}}(t) =\Hilb_{S_{H}/J_{H}}(t) \frac{(q-r-1)t+1}{(1-t)^{q-r+1}}+\frac{(p+q-1)t+1}{(1-t)^{p+q+1}}-\frac{(p+q-r-1)t+1}{(1-t)^{p+q-r+1}}.$$
\end{theorem}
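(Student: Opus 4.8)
The plan is to mimic the strategy of Theorems \ref{4.3} and \ref{4.6}: decompose $J_G$ as an intersection of the minimal primes indexed by $\mathscr{C}(G)$, group them into two pieces, and extract the Hilbert series from a single short exact sequence. Since $G=H(*)^{r}K_{q}$ is connected (every vertex of $H$ is joined to $v_1\in [q]$, and $K_q$ is complete), the empty set contributes $P_{\phi}(G)=J_{K_{p+q}}$. By Lemma \ref{4.11} the only other sets with the cut point property are $T=T'\sqcup\{v_{1},\ldots,v_{r}\}$ with $T'\in\mathscr{C}(H)$, so by \cite[Corollary 3.9]{HH1} one has $J_{G}=J_{K_{p+q}}\cap Q$, where $Q=\bigcap_{T'\in\mathscr{C}(H)}P_{T'\sqcup\{v_{1},\ldots,v_{r}\}}(G)$.

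The first substantive step is to describe $Q$ explicitly. For $T=T'\sqcup\{v_{1},\ldots,v_{r}\}$, deleting the $v_{i}$'s severs $H$ from the rest of $K_q$, so $G[\bar{T}]=H[\bar{T'}]\sqcup K_{q-r}$; reading off the connected components gives $P_{T}(G)=P_{T'}(H)+A$, where $A=(z_{v_{i}},w_{v_{i}}:i\in[r])+J_{K_{q-r}}$ is independent of $T'$ and lives in the $K_q$-variables, disjoint from those of $H$. Because $A$ is common to every prime in the intersection, the distributive identity for ideals in disjoint sets of variables (equivalently, flatness of base change over the field $K$) yields $Q=A+\bigcap_{T'}P_{T'}(H)=A+J_{H}$, the last equality again by \cite[Corollary 3.9]{HH1}.

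With this description I would apply the short exact sequence
$$0\longrightarrow\frac{S}{J_{G}}\longrightarrow\frac{S}{J_{K_{p+q}}}\oplus\frac{S}{Q}\longrightarrow\frac{S}{J_{K_{p+q}}+Q}\longrightarrow 0$$
and evaluate the three Hilbert series. The term $\Hilb_{S/J_{K_{p+q}}}(t)$ is the complete-graph series $\frac{(p+q-1)t+1}{(1-t)^{p+q+1}}$. Since killing the variables $z_{v_{i}},w_{v_{i}}$ leaves $S/Q\cong(S_{H}/J_{H})\otimes_{K}(S_{K_{q-r}}/J_{K_{q-r}})$, its series factors as $\Hilb_{S_{H}/J_{H}}(t)\cdot\frac{(q-r-1)t+1}{(1-t)^{q-r+1}}$. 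Finally, since $J_{H}$ and $J_{K_{q-r}}$ are already contained in $J_{K_{p+q}}$, the sum collapses to $J_{K_{p+q}}+Q=J_{K_{p+q}}+(z_{v_{i}},w_{v_{i}}:i\in[r])$; setting these $r$ pairs of variables to zero in the complete graph identifies the quotient with the binomial edge ideal of $K_{p+q-r}$, whose series is $\frac{(p+q-r-1)t+1}{(1-t)^{p+q-r+1}}$. Adding the first two and subtracting the third gives exactly the claimed formula.

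The main obstacle I anticipate is the explicit description of $Q$ in the second paragraph: one must verify both that each $P_{T}(G)$ has the form $P_{T'}(H)+A$ with a common $A$, and that the intersection distributes over the fixed summand $A$. The former is a careful but routine reading of the component structure of $G[\bar{T}]$ from the definition of $H(*)^{r}K_{q}$; the latter relies on $A$ and the $P_{T'}(H)$ involving disjoint variables, which makes the intersection behave like $A+\bigcap_{T'}P_{T'}(H)$. Once $Q$ is pinned down, the remaining Hilbert-series computations follow immediately from the complete-graph formula and the multiplicativity of Hilbert series under tensor products over $K$.
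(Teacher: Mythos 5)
Your proposal is correct and follows essentially the same route as the paper: the same appeal to Lemma \ref{4.11} to write $J_{G}=J_{K_{p+q}}\cap Q$ with $Q=(x_{v_{i}},y_{v_{i}}: i\in[r])+J_{H}+J_{K_{q-r}}$, the same short exact sequence relating $S/J_{G}$, $S/J_{K_{p+q}}\oplus S/Q$ and $S/(J_{K_{p+q}}+Q)$, and the same three Hilbert series evaluations. The only difference is cosmetic: you spell out the disjoint-variables distributivity argument identifying $Q$ (a step the paper asserts without comment), which is a worthwhile clarification but not a different proof.
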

\begin{proof}
By \cite[Corollary 3.9]{HH1}, $J_G = \underset{ T \in \mathscr{C}(G)} \cap P_T(G)$.
Now, by Lemma \ref{4.11},
\begin{align*}
J_{G} &=P_{\phi}(G)\cap \underset{T \in \mathscr{C}(G) , T\neq \phi} {\cap}  P_{T}(G)
\\ &= J_{K_{p+q}} \cap ( (x_{v_{i}},y_{v_{i}}: i \in [r]) +J_{H}+J_{K_{q-r}}).
\end{align*}
Let $Q=(x_{v_{i}},y_{v_{i}}: i \in [r])+J_{H}+J_{K_{q-r}}$. Note  that 
$J_{K_{p+q}}+Q=(x_{v_{i}},y_{v_{i}}: i \in [r])+J_{K_{p+q-r}}.$ Then it follows from the exact sequence below
$$ 0\longrightarrow \frac{S}{J_{G}}\longrightarrow \frac{S}{J_{K_{p+q}}}\oplus \frac{S}{Q}\longrightarrow \frac{S}{J_{K_{p+q}}+Q}\longrightarrow 0 $$
 that
\begin{align*}
\Hilb_{S/J_{G}}(t) &= \Hilb_{S/Q}(t)+\Hilb_{S/J_{K_{p+q}}}(t) - \Hilb_{S/(J_{K_{p+q}}+Q)}(t) 
\\ &=\Hilb_{S_{H}/J_{H}}(t) \frac{(q-r-1)t+1}{(1-t)^{q-r+1}}+\frac{(p+q-1)t+1}{(1-t)^{p+q+1}}-\frac{(p+q-r-1)t+1}{(1-t)^{p+q-r+1}}.
\end{align*}
\end{proof}
Now we compute the Hilbert series of a q-cone over a connected graph $H$, i.e., join of $H$ and $q$ isolated
vertices. We  denote the graph of q isolated vertices by $K_q^c$.

\begin{theorem}\label{4.13}
Let $H$ be a connected graph on the vertex set $[p]$. Let $G=H*K_{q}^{c}$ be the join of $H$ and $K_{q}^{c}$. Then 
$$ \Hilb_{S/J_{G}}(t)=\Hilb_{S_{H}/J_{H}}(t) + \Hilb_{S/J_{K_{p}*{K_{q}^c}}}(t) - \frac{(p-1)t+1}{(1-t)^{p+1}}.$$
\end{theorem}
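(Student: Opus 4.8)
The plan is to argue by induction on $q$, peeling off one isolated vertex at a time via Ohtani's decomposition; the base case $q=1$ will be nothing but Theorem \ref{4.9}. Indeed, since $K_p*K_1^c = K_{p+1}$, the quantity $\Hilb_{S/J_{K_p*K_q^c}}(t) - \frac{(p-1)t+1}{(1-t)^{p+1}}$ specializes at $q=1$ to $\frac{pt+1}{(1-t)^{p+2}} - \frac{(p-1)t+1}{(1-t)^{p+1}} = \frac{(p-1)t^2+2t}{(1-t)^{p+2}}$, which is precisely the correction term in Theorem \ref{4.9}. If $H=K_p$ the assertion is a tautology, so throughout I assume $H\neq K_p$.

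For the inductive step fix $q\geq 2$ and let $w$ be one of the $q$ isolated vertices constituting $K_q^c$. First I would unwind $G_w$ and $G\setminus w$. Since $N_G(w)=V(H)=[p]$, adding all edges among the neighbours of $w$ completes the induced subgraph on $[p]$ to $K_p$ while leaving the join with the remaining isolated vertices untouched; hence $G_w = K_p*K_q^c$, and clearly $G\setminus w = H*K_{q-1}^c$. As $H\neq K_p$, $w$ lies in more than one maximal clique and is not free, so \cite[Lemma 4.8]{oh} yields
$$J_G = J_{K_p*K_q^c} \cap \big((x_w,y_w) + J_{H*K_{q-1}^c}\big).$$

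Next I would compute the sum of these two ideals. Reducing modulo $(x_w,y_w)$ annihilates every generator of $J_{K_p*K_q^c}$ that involves $w$, giving $J_{K_p*K_q^c}+(x_w,y_w) = (x_w,y_w)+J_{K_p*K_{q-1}^c}$; and because $H*K_{q-1}^c$ is a subgraph of $K_p*K_{q-1}^c$ on the same vertex set, $J_{H*K_{q-1}^c}\subseteq J_{K_p*K_{q-1}^c}$, so that summand is absorbed and
$$J_{K_p*K_q^c} + \big((x_w,y_w)+J_{H*K_{q-1}^c}\big) = (x_w,y_w)+J_{K_p*K_{q-1}^c}.$$
Feeding the intersection into the short exact sequence
$$0 \to S/J_G \to S/J_{K_p*K_q^c} \oplus S/\big((x_w,y_w)+J_{H*K_{q-1}^c}\big) \to S/\big((x_w,y_w)+J_{K_p*K_{q-1}^c}\big) \to 0,$$
and using that killing $x_w,y_w$ recovers the Hilbert series of the corresponding graph in its own polynomial ring $S'$ on the $2(p+q-1)$ remaining variables, I obtain the recursion
$$\Hilb_{S/J_G}(t) = \Hilb_{S/J_{K_p*K_q^c}}(t) + \Hilb_{S'/J_{H*K_{q-1}^c}}(t) - \Hilb_{S'/J_{K_p*K_{q-1}^c}}(t).$$

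Finally I would invoke the induction hypothesis on $H*K_{q-1}^c$, namely $\Hilb_{S'/J_{H*K_{q-1}^c}}(t) = \Hilb_{S_H/J_H}(t) + \Hilb_{S'/J_{K_p*K_{q-1}^c}}(t) - \frac{(p-1)t+1}{(1-t)^{p+1}}$; substituting this into the recursion, the two $K_p*K_{q-1}^c$ contributions cancel and the stated identity falls out. The hard part is not any single computation but the bookkeeping in the inductive step: one must pin down the sum ideal exactly (in particular confirm that $J_{H*K_{q-1}^c}$ is genuinely absorbed) and track ambient rings carefully, so that the never-evaluated quantities $\Hilb_{S/J_{K_p*K_q^c}}$ and $\Hilb_{S'/J_{K_p*K_{q-1}^c}}$ telescope correctly. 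Notably, no closed form for the Hilbert series of $K_p*K_q^c$ is required, which is exactly why that term is left intact on the right-hand side.
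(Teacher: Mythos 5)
Your proof is correct and takes essentially the same route as the paper's: induction on $q$ with Theorem \ref{4.9} as the base case, Ohtani's lemma applied at an isolated vertex $w$ (using $G_w = K_p * K_q^c$ and $G\setminus w = H*K_{q-1}^c$), the standard short exact sequence for an intersection of ideals, and then telescoping the $K_p*K_{q-1}^c$ terms via the induction hypothesis. Your explicit disposal of the case $H=K_p$ (where $w$ would be a free vertex, so Ohtani's lemma does not apply, but the identity is a tautology) is a point of care that the paper's proof leaves implicit.
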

\begin{proof}
We proceed by induction on $q$. For $q=1$, $G=H*\{v\}$ and the result follows from Theorem  \ref{4.9}.
Now assume that $q> 1$ and the result is true for $q-1$. Let $G=H*K_{q}^{c}$ and 
$V(K_{q}^{c})=\left\{v_{1},v_{2},...,v_{q} \right\}.$ Let $Q_{1}=(x_{v_{q}},y_{v_{q}})+J_{G\setminus v_{q}}$ 
and $Q_{2}=J_{G_{v_{q}}}$. Note that $Q_{1}+Q_{2}=(x_{v_{q}},y_{v_{q}})+J_{G_{v_{q}}\setminus v_{q}}$.
Since $v_{q}$ is not a free vertex, it follows from \cite[Lemma 4.8]{oh} that $J_{G}=Q_{1}\cap Q_{2}$. Let $R=K[x_{i},y_{i} : i\in V(G) \ and \ i\neq v_{q}]$. Then by induction hypothesis 
$$ \Hilb_{S/{Q_{1}}}(t)=\Hilb_{R/J_{G\setminus v_{q}}}(t)=\Hilb_{S_{H}/J_{H}}(t)+ \Hilb_{R/J_{K_{p}*K_{q-1}^c}}(t) -\frac{(p-1)t+1}{(1-t)^{p+1}}.$$ 
Consider the short exact sequence
$$ 0\longrightarrow \frac{S}{J_{G}}\longrightarrow \frac{S}{Q_{1}}\oplus \frac{S}{Q_{2}}\longrightarrow \frac{S}{Q_{1}+Q_{2}} \longrightarrow 0. $$
Note that $ G_{v_q} = K_p*K_{q}^c$, $G_{v_q} \setminus v_q = K_p*K_{q-1}^c$ and $ G \setminus v_q =H*K_{q-1}^c$.
Therefore, it follows from the above short exact sequence that
$$\Hilb_{S/J_{G}}(t) =\Hilb_{S_{H}/J_{H}}(t) + \Hilb_{S/J_{K_{p}*K_{q}^c}}(t) - \frac{(p-1)t+1}{(1-t)^{p+1}}.$$
\end{proof}
As a consequence of the Theorems
\ref{4.6}, \ref{4.9} and \ref{4.13}, we compute the Hilbert series of fan graphs.
\begin{corollary}
Let $G=P_{p}*K_{q}^{c}$ be the fan graph. Then
$$\Hilb_{S/J_{G}}(t) =\frac{(1+t)^{p-1}}{(1-t)^{p+1}}+ \frac{1}{(1-t)^{2q}}+ \frac{(p+q-1)t+1}{(1-t)^{p+q+1}} -
\frac{(p-1)t+1}{(1-t)^{p+1}}- \frac{(q-1)t+1}{(1-t)^{q+1}}.$$
In particular, $\dim(S/J_{G})=\max\{2q,p+q+1 \}$ and 
\[
e(S/J_{G}) = 
     \begin{cases}
      {1} & \quad\text{if p}+1 < q, \\
       {p+q+1} &\quad\text{if p}+1 = q, \\
         {p+q} &\quad\text{if p}+1 > q. 
     \end{cases}
\]
\end{corollary}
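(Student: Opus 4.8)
The plan is to reduce the Hilbert series of the fan graph $G = P_p * K_q^c$ to those of the path $P_p$ and of the graph $K_p * K_q^c$, each of which is reachable from the preceding results. Since $P_p$ is connected, I would first invoke Theorem \ref{4.13} with $H = P_p$, obtaining
$$\Hilb_{S/J_G}(t) = \Hilb_{S_{P_p}/J_{P_p}}(t) + \Hilb_{S/J_{K_p * K_q^c}}(t) - \frac{(p-1)t+1}{(1-t)^{p+1}},$$
and then substitute $\Hilb_{S_{P_p}/J_{P_p}}(t) = (1+t)^{p-1}/(1-t)^{p+1}$ from Corollary \ref{3.6}. This isolates $\Hilb_{S/J_{K_p*K_q^c}}(t)$ as the only term still to be determined.

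For $q \geq 2$ I would compute that term by viewing $K_p * K_q^c = K_q^c * K_p$ and applying Theorem \ref{4.6} to the disconnected graph $K_q^c$ joined with the complete graph $K_p$. Because $K_q^c$ is edgeless, $J_{K_q^c} = 0$ and $S_{K_q^c}/J_{K_q^c}$ is a polynomial ring in $2q$ variables, so $\Hilb_{S_{K_q^c}/J_{K_q^c}}(t) = (1-t)^{-2q}$; Theorem \ref{4.6} then gives
$$\Hilb_{S/J_{K_p*K_q^c}}(t) = \frac{1}{(1-t)^{2q}} + \frac{(p+q-1)t+1}{(1-t)^{p+q+1}} - \frac{(q-1)t+1}{(1-t)^{q+1}}.$$
For $q = 1$ the graph $K_p * K_1^c$ is the complete graph $K_{p+1}$, so one instead applies Theorem \ref{4.9} with $H = K_p$ and checks that the same expression results. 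Substituting into the identity of the first paragraph produces the stated Hilbert series directly.

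For the two invariants I would place the five summands over the common denominator $(1-t)^{\dim}$ and track the order of the pole at $t=1$. The summands carry pole orders $p+1$, $2q$, $p+q+1$, $q+1$ and $p+1$; since $p+q+1$ exceeds both $p+1$ and $q+1$ whenever $p,q \geq 1$, the dimension is governed by $\max\{2q, p+q+1\}$, the comparison turning on the sign of $p+1-q$. Writing $\Hilb_{S/J_G}(t) = Q(t)/(1-t)^{\dim}$, the multiplicity is $Q(1)$, and only the summands of maximal pole order survive this evaluation, the remaining ones acquiring positive powers of $(1-t)$. The relevant numerators are $1$ from $(1-t)^{-2q}$ and $(p+q-1)t+1$ from $(1-t)^{-(p+q+1)}$, the latter equal to $p+q$ at $t=1$; summing the contributing ones across the regimes $p+1<q$, $p+1=q$ and $p+1>q$ gives $1$, $p+q+1$ and $p+q$ respectively.

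Every step is a direct appeal to an earlier theorem, so the only delicate point is the bookkeeping of the last paragraph: confirming that the pole-order-$(p+1)$ and pole-order-$(q+1)$ summands are never dominant, and handling the boundary case $p+1=q$, where the $(1-t)^{-2q}$ and $(1-t)^{-(p+q+1)}$ terms share the maximal pole order and both contribute, yielding $e = 1 + (p+q) = p+q+1$.
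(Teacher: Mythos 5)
Your proposal is correct and follows essentially the same route as the paper: apply Theorem \ref{4.13} with $H = P_p$, evaluate $\Hilb_{S/J_{K_p*K_q^c}}(t)$ via Theorem \ref{4.6} (viewing $K_q^c$ as the disconnected factor) for $q \geq 2$ and via Theorem \ref{4.9} for $q = 1$, then substitute Corollary \ref{3.6}. Your pole-order bookkeeping for the dimension and multiplicity, including the boundary case $2q = p+q+1$ where both dominant terms contribute, is also correct and simply makes explicit what the paper leaves to the reader.
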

\begin{proof}
By Theorem \ref{4.13}, $$\Hilb_{S/J_{G}}(t) = \frac{(1+t)^{p-1}}{(1-t)^{p+1}}+ \Hilb_{S/J_{K_{p}*K_{q}^c}}(t) - \frac{(p-1)t+1}{(1-t)^{p+1}}.$$ 
If $q=1$, then the assertion follows from the Theorem \ref{4.9}.
Assume that $q > 1$. Now by Theorem \ref{4.6}, the assertion follows.
\end{proof}
\begin{theorem}\label{4.14}
Let $H$ be a connected graph on $[p]$ and $G=H*K_{q}$, where $q\geq 2$. Then 
$$ \Hilb_{S/J_{G}}(t)=\Hilb_{S_{H}/J_{H}}(t)+\frac{(p+q-1)t+1}{(1-t)^{p+q+1}} - \frac{(p-1)t+1}{(1-t)^{p+1}}.$$
\end{theorem}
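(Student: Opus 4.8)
The plan is to induct on $q$, exploiting the fact that every vertex of the $K_q$ part is a universal vertex of $G$. First I would dispose of the degenerate case $H=K_p$: there $G=K_{p+q}$, and substituting $\Hilb_{S_H/J_H}(t)=\frac{(p-1)t+1}{(1-t)^{p+1}}$ into the claimed identity makes the two $\frac{(p-1)t+1}{(1-t)^{p+1}}$ terms cancel, reducing it to the determinantal formula $\Hilb_{S/J_{K_{p+q}}}(t)=\frac{(p+q-1)t+1}{(1-t)^{p+q+1}}$. The base case of the induction is $q=1$, where $G=H*\{v\}$ and the statement is Theorem \ref{4.9}: a one-line computation combining its two rational terms over $(1-t)^{p+2}$ shows that the right-hand side $\Hilb_{S_H/J_H}(t)+\frac{(p-1)t^2+2t}{(1-t)^{p+2}}$ coincides with the $q=1$ specialization $\Hilb_{S_H/J_H}(t)+\frac{pt+1}{(1-t)^{p+2}}-\frac{(p-1)t+1}{(1-t)^{p+1}}$ of the asserted formula, so $q=1$ furnishes the correct seed.

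For the inductive step, fix $q\geq 2$, write $V(K_q)=\{v_1,\dots,v_q\}$, and assume $H\neq K_p$. Since each $v_i$ is adjacent to every other vertex of $G$, the vertex $v_q$ is universal; hence $G_{v_q}=K_{p+q}$ and $G\setminus v_q=H*K_{q-1}$. Because $H\neq K_p$, the graph $H$, and therefore $H*K_{q-1}$, has at least two maximal cliques, so $v_q$ lies in at least two maximal cliques of $G$ and is not a free vertex. Ohtani's lemma \cite[Lemma 4.8]{oh} then yields $J_G=J_{K_{p+q}}\cap\big((x_{v_q},y_{v_q})+J_{H*K_{q-1}}\big)$, which I would feed into the short exact sequence
$$0\longrightarrow \frac{S}{J_G}\longrightarrow \frac{S}{J_{K_{p+q}}}\oplus\frac{S}{(x_{v_q},y_{v_q})+J_{H*K_{q-1}}}\longrightarrow \frac{S}{J_{K_{p+q}}+(x_{v_q},y_{v_q})+J_{H*K_{q-1}}}\longrightarrow 0.$$

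The decisive simplification is the cokernel term: since $J_{H*K_{q-1}}\subseteq J_{K_{p+q-1}}$ and $J_{K_{p+q}}+(x_{v_q},y_{v_q})=(x_{v_q},y_{v_q})+J_{K_{p+q-1}}$, the sum ideal collapses to $(x_{v_q},y_{v_q})+J_{K_{p+q-1}}$, whose quotient has Hilbert series $\frac{(p+q-2)t+1}{(1-t)^{p+q}}$. Taking the alternating sum of Hilbert series along the sequence and inserting the inductive hypothesis $\Hilb_{R/J_{H*K_{q-1}}}(t)=\Hilb_{S_H/J_H}(t)+\frac{(p+q-2)t+1}{(1-t)^{p+q}}-\frac{(p-1)t+1}{(1-t)^{p+1}}$ (with $R$ the polynomial ring on the vertices other than $v_q$, which still contains $S_H$), the two $\frac{(p+q-2)t+1}{(1-t)^{p+q}}$ contributions cancel and the identity follows. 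I expect the only genuine care to lie in the bookkeeping of the sum ideal and in confirming that $v_q$ is not free; once those are settled, the remainder is a formal cancellation parallel to the recursion in Theorem \ref{4.13}.
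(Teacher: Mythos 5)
Your proof is correct, but it follows a genuinely different route from the paper. You induct on $q$ and apply Ohtani's lemma directly inside $G$ itself, at a universal vertex $v_q$ of the clique part: since $H \neq K_p$ forces $v_q$ to lie in at least two maximal cliques (the maximal cliques of $G$ are exactly $C \cup V(K_q)$ with $C$ a maximal clique of $H$), the lemma gives $J_G = J_{G_{v_q}} \cap \bigl((x_{v_q},y_{v_q}) + J_{G \setminus v_q}\bigr)$ with $G_{v_q} = K_{p+q}$ and $G \setminus v_q = H * K_{q-1}$, and the cokernel collapses to $(x_{v_q},y_{v_q}) + J_{K_{p+q-1}}$; the recursion then telescopes against the inductive hypothesis, seeded by Theorem \ref{4.9} at $q=1$ (your verification that the two rational expressions agree there is the right consistency check, as is your separate disposal of $H = K_p$). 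The paper instead avoids induction entirely: it appends an auxiliary isolated vertex $w$ to $H$, forms $G' = (H \sqcup \{w\}) * K_q^c$, applies Ohtani's lemma at $w$, and reads off $\Hilb_{S/J_G}$ from the identifications $G'_w = H\,(*)^q\, K_{q+1}$, $G' \setminus w = H * K_q^c$, $G'_w \setminus w = G$, invoking Theorem \ref{4.3} for $G'$, Theorem \ref{4.12} for $H\,(*)^q\, K_{q+1}$, and Theorem \ref{4.13} for $H * K_q^c$. What each buys: your argument is more self-contained and elementary, needing only Theorem \ref{4.9}, the complete-graph Hilbert series, and Ohtani's lemma — in particular it makes Theorem \ref{4.14} independent of the intermediate constructions $(*)^r$ and $H * K_q^c$ — while the paper's argument reuses machinery (Theorems \ref{4.3}, \ref{4.12}, \ref{4.13}) that it must develop anyway for the general join formula of Theorem \ref{4.15}, so nothing there is wasted effort.
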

\begin{proof}
Let $H^{'}=H\sqcup \{ w\}$, $G^{'}=H^{'}*K_{q}^{c}$, $ S_{H^{'}}=S_{H}[x_{w},y_{w}]$ and $S^{'}=S[x_{w},y_{w}]$. By Theorem \ref{4.3}, 
$$\Hilb_{S^{'}/J_{G^{'}}}(t) =\frac{1}{(1-t)^2} \Hilb_{S_{H}/J_{H}}(t)+\frac{1}{(1-t)^{2q}}+
\frac{(p+q)t+1}{(1-t)^{p+q+2}}-\frac{pt+1}{(1-t)^{p+2}}-\frac{(q-1)t+1}{(1-t)^{q+1}}.$$   
Since $w$ is not a free vertex of $G^{'}$, it follows from \cite[Lemma 4.8]{oh}
 that $J_{G^{'}}= ((x_{w},y_{w})+J_{G^{'}\setminus w})\cap J_{G^{'}_{w}}$.
 Let $Q_{1}=(x_{w},y_{w})+J_{G^{'} \setminus w}$, $Q_{2}=J_{G^{'}_{w}}$. 
 Note that $Q_{1}+Q_{2}=(x_{w},y_{w})+J_{G^{'}_{w}\setminus w}$,
 $G^{'}_{w}=H (*)^{q} K_{q+1}$, ${G^{'} \setminus w}=H*K_{q}^{c}$ and ${G^{'}_{w} \setminus w}=H*K_{q}=G$. 
 Consider the short exact sequence
$$ 0\longrightarrow \frac{S^{'}}{J_{G^{'}}}\longrightarrow \frac{S^{'}}{Q_{1}}\oplus \frac{S^{'}}{Q_{2}}\longrightarrow \frac{S^{'}}{Q_{1}+Q_{2}} \longrightarrow 0 .$$
Then by Theorem \ref{4.12} and Theorem \ref{4.13}
\begin{align*}  
\Hilb_{S/J_{G}}(t) & =\Hilb_{S/J_{H*K_{q}^{c}}}(t)+\Hilb_{S^{'}/J_{H (*)^{q} K_{q+1}}}(t)-\Hilb_{S^{'}/J_{G^{'}}}(t)
\\ &= \Hilb_{S_{H}/J_{H}}(t)+ \frac{(p+q-1)t+1}{(1-t)^{p+q+1}}- \frac{(p-1)t+1}{(1-t)^{p+1}}.
\end{align*}
\end{proof}
Now we are ready to compute the Hilbert series of the join of two arbitrary graphs.
\begin{theorem}\label{4.15}
Let $H$ and $H'$ be graphs on vertex sets $[p]$ and $[q]$, respectively. Let $G=H*H^{'}$ be the join of $H$ and $H'$. 
Then $$ \Hilb_{S/J_{G}}(t)=\Hilb_{S_{H}/J_{H}}(t)+\Hilb_{S_{H^{'}}/J_{H^{'}}}(t)+
\frac{(p+q-1)t+1}{(1-t)^{p+q+1}}-\frac{(p-1)t+1}{(1-t)^{p+1}}-\frac{(q-1)t+1}{(1-t)^{q+1}} .$$
\end{theorem}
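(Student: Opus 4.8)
The plan is to reduce to the cases already settled and to treat the genuinely new situation---where both $H$ and $H'$ are connected---by an auxiliary-vertex argument in the spirit of Theorem \ref{4.14}. Since the join is commutative and the asserted formula is symmetric in $(H,p)\leftrightarrow(H',q)$, I may assume things about which side is connected without loss of generality. If both $H$ and $H'$ are disconnected, the formula is exactly Theorem \ref{4.3}. If one side, say $H'$, equals a complete graph $K_q$, then $G=H*K_q$ and the statement follows from Theorem \ref{4.6} (when $H$ is disconnected), Theorem \ref{4.14} (when $H$ is connected and $q\ge 2$), or Theorem \ref{4.9} (when $q=1$); in each case the stated formula coincides with the target after absorbing $\Hilb_{S_{K_q}/J_{K_q}}(t)=\frac{(q-1)t+1}{(1-t)^{q+1}}$. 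It therefore remains to treat the two cases (i) $H$ connected and $H'$ disconnected, and (ii) $H$ and $H'$ both connected and non-complete.

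For both remaining cases I would use the same device. Introduce a new vertex $w$, isolated on the $H$-side, and set $G'=(H\sqcup\{w\})*H'$, working in $S'=S[x_w,y_w]$. The key point is to record the three derived graphs: deleting $w$ gives $G'\setminus w=H*H'=G$; filling in $N_{G'}(w)=V(H')$ turns $H'$ into a clique, so $G'_w=H(*)^{q}K_{q+1}$; and then $G'_w\setminus w=H*K_q$. Because $H'$ is not complete in either remaining case, $V(H')$ is not a clique, so $w$ lies in more than one maximal clique of $G'$ and is not a free vertex. Hence \cite[Lemma 4.8]{oh} gives $J_{G'}=J_{G'_w}\cap\big((x_w,y_w)+J_{G'\setminus w}\big)$, and setting $Q_1=(x_w,y_w)+J_{G'\setminus w}$, $Q_2=J_{G'_w}$ (so that $Q_1+Q_2=(x_w,y_w)+J_{G'_w\setminus w}$), the short exact sequence
\[ 0\longrightarrow \frac{S'}{J_{G'}}\longrightarrow \frac{S'}{Q_1}\oplus\frac{S'}{Q_2}\longrightarrow \frac{S'}{Q_1+Q_2}\longrightarrow 0 \]
yields $\Hilb_{S/J_G}(t)=\Hilb_{S'/J_{G'}}(t)-\Hilb_{S'/J_{H(*)^qK_{q+1}}}(t)+\Hilb_{S/J_{H*K_q}}(t)$, after the identifications $S'/Q_1\cong S/J_G$ and $S'/(Q_1+Q_2)\cong S/J_{H*K_q}$.

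Each term on the right is then known. The middle term is Theorem \ref{4.12} applied with clique $K_{q+1}$ and $r=q$, which simplifies because $(q+1)-q-1=0$; the last term is Theorem \ref{4.14}, applicable since $H$ is connected and $q\ge 2$ in both remaining cases. For the first term I would order the cases to avoid circularity: in case (i) the graph $G'=(H\sqcup\{w\})*H'$ is a join of two disconnected graphs, so $\Hilb_{S'/J_{G'}}$ comes from Theorem \ref{4.3}; once case (i) is established, in case (ii) the graph $G'$ is a join of the disconnected graph $H\sqcup\{w\}$ with the connected graph $H'$, which is precisely the content of case (i) with the two sides interchanged. Substituting the three expressions and using $\Hilb_{S_{H\sqcup\{w\}}/J_{H\sqcup\{w\}}}(t)=\frac{1}{(1-t)^2}\Hilb_{S_H/J_H}(t)$ produces the desired identity after cancellation: the $\frac{1}{(1-t)^2}\Hilb_{S_H/J_H}(t)$, $\frac{(p+q)t+1}{(1-t)^{p+q+2}}$ and $\frac{pt+1}{(1-t)^{p+2}}$ contributions all drop out, leaving exactly the asserted formula.

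The main obstacle is structural rather than computational: one must find an auxiliary graph whose binomial edge ideal is already accessible and one of whose vertex-deletions is $G$, and then arrange the logical order of the cases so that the disconnected-times-connected instance is in hand before it is invoked for the connected-times-connected case. Verifying that $w$ is non-free (which is what forces the complete-graph subcase to be split off and handled separately) and correctly identifying $G'_w=H(*)^{q}K_{q+1}$ and $G'_w\setminus w=H*K_q$ are the two places where care is needed; once these are secured, the closing step is a routine cancellation that I would not belabor.
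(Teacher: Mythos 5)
Your proof is correct and takes essentially the same route as the paper: the same auxiliary isolated vertex, the non-free-vertex decomposition of \cite[Lemma 4.8]{oh}, the same short exact sequence and identifications $G'_w=H(*)^qK_{q+1}$, $G'_w\setminus w=H*K_q$, and the same bootstrapping of the both-connected case from the connected--disconnected case via Theorems \ref{4.3}, \ref{4.12} and \ref{4.14}. If anything, your version is slightly more careful than the paper's: by splitting off the case where one factor is a complete graph, you guarantee the auxiliary vertex is never free, a hypothesis the paper's both-connected case uses silently (there, if $H$ were complete, the added vertex $u$ would be free and \cite[Lemma 4.8]{oh} would not apply).
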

\begin{proof}
 If both $H$ and $H'$ are disconnected graphs, then the assertion is proved in Theorem \ref{4.3}.
Assume that $H$ is connected and $H'$ is disconnected. Let $w$ be a new vertex. Set 
$H^{''}=H\sqcup \left\{ w \right\},$ $G^{'}=H^{''}*H^{'}$, $S_{H^{''}}=S_{H}[x_{w},y_{w}]$ and $S^{'}=S[x_{w},y_{w}]$. 
Let $Q_{1}=(x_{w},y_{w})+J_{G' \setminus w}$ and $Q_{2}=J_{G^{'}_{w}}$. 
Since $w$ is not a free vertex of $G'$, it follow from \cite[Lemma 4.8]{oh} that $J_{G^{'}}=Q_{1}\cap Q_{2}$.
Note that $Q_{1}+Q_{2}=(x_{w},y_{w})+J_{G^{'}_{w}\setminus w}$,
$G^{'}_{w}=H (*)^{q} K_{q+1},$ $G^{'}_{w}\setminus w=H*K_{q}$ and $G' \setminus w= G$.
Therefore, it follows from the short exact sequence
$$ 0\longrightarrow \frac{S^{'}}{J_{G^{'}}}\longrightarrow \frac{S^{'}}{Q_{1}}\oplus \frac{S^{'}}{Q_{2}}\longrightarrow \frac{S^{'}}{Q_{1}+Q_{2}} \longrightarrow 0 $$
that $$ \Hilb_{S/J_{G}}(t)=\Hilb_{S^{'}/J_{G^{'}}}(t)+\Hilb_{S/J_{H*K_q}}(t)-\Hilb_{S^{'}/J_{H (*)^q K_{q+1}}}(t) .$$
Note that $G'$ is join of two disconnected graphs.
Now, the assertion follows from Theorems \ref{4.3}, \ref{4.12} and \ref{4.14}.\\
Assume now that both $H$ and $H'$ are connected.  Let $u$ be a new vertex. Set 
$H^{'''}=H'\sqcup \left\{ u \right\},$ $G^{''}=H*H^{'''}$, $S_{H^{'''}}=S_{H'}[x_{u},y_{u}]$ and $S^{'}=S[x_{u},y_{u}]$.
Let $Q_{1}=(x_{u},y_{u})+J_{G}$ and $Q_{2}=J_{G^{''}_{u}}$.
It follow from the \cite[Lemma 4.8]{oh} that $J_{G^{''}}=Q_{1}\cap Q_{2}$. 
Consider the short exact sequence
$$ 0\longrightarrow \frac{S^{'}}{J_{G^{''}}}\longrightarrow \frac{S^{'}}{Q_{1}}\oplus \frac{S^{'}}{Q_{2}}\longrightarrow \frac{S^{'}}{Q_{1}+Q_{2}} \longrightarrow 0 .$$
Note that $Q_{1}+Q_{2}=(x_{u},y_{u})+J_{G^{''}_{u}\setminus u}$, $G^{''}_{u}=H' (*)^{q} K_{q+1},$ $G^{''}_{u}\setminus u=H'*K_{q}$ and $G'' \setminus u= G$.
Then
$$ \Hilb_{S/J_{G}}(t)= \Hilb_{S^{'}/J_{G^{''}}}(t)+ \Hilb_{S/J_{H'*K_q}}(t)-\Hilb_{S^{'}/J_{H' (*)^q K_{q+1}}}(t) .$$
Since $G''$ is join of a connected graph $H$ and a disconnected graph $H'''$, by previous case 
$$ \Hilb_{S'/J_{G''}}(t)=\Hilb_{S_{H}/J_{H}}(t)+\Hilb_{S_{H^{'''}}/J_{H^{'''}}}(t)+
\frac{(p+q)t+1}{(1-t)^{p+q+2}}-\frac{(p-1)t+1}{(1-t)^{p+1}}-\frac{qt+1}{(1-t)^{q+2}} .$$
Now, the assertion follows from Theorems \ref{4.12} and \ref{4.14}.
\end{proof}

As an immediate consequence, we are able to compute the Hilbert series, dimension and multiplicity of complete multi-partite graphs. 
\begin{corollary}
 Let $G=K_{p_1,\ldots,p_k}$ be the complete $k$-partite graph on the vertex set $[n] = [p_1] \sqcup \cdots \sqcup [p_k]$ with $p_1 \geq \cdots \geq p_k$. Then 
 \[ \Hilb_{S/J_{G}}(t)= \underset{ i \in [k]} \sum \frac{1}{(1-t)^{2p_i}}-
 \underset{ i \in [k]} \sum \frac{(p_i-1)t+1}{(1-t)^{p_i +1}}+ \frac{(n-1)t+1}{(1-t)^{n+1}} .\] In particular, $\dim(S/J_G)=\max\{2p_1, n+1  \}$ and
 \[
e(S/J_{G}) = 
     \begin{cases}
      {n} & \quad\text{if p}_1=1 \quad\text{or } 2p_1 <n+1, \\
       {1} &\quad\text{if }2p_1 > n+1, \\
         {n+1} &\quad\text{if }2p_1 = n+1. 
     \end{cases}
\]
\end{corollary}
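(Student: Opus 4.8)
The plan is to realize $G = K_{p_1,\ldots,p_k}$ as the iterated join $K_{p_1}^c * K_{p_2}^c * \cdots * K_{p_k}^c$ of the empty graphs $K_{p_i}^c$, and then to apply Theorem \ref{4.15} repeatedly. Writing $n_j = p_1 + \cdots + p_j$ and $G_j = K_{p_1,\ldots,p_j}$, so that $G_j = G_{j-1} * K_{p_j}^c$ and $G_k = G$, I would first record the base fact that $J_{K_p^c} = 0$, whence $S_{K_p^c}/J_{K_p^c}$ is a polynomial ring in $2p$ variables with $\Hilb_{S_{K_p^c}/J_{K_p^c}}(t) = (1-t)^{-2p}$.

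Next, applying Theorem \ref{4.15} with $H = G_{j-1}$ (on $n_{j-1}$ vertices) and $H' = K_{p_j}^c$ (on $p_j$ vertices) gives
$$\Hilb_{S/J_{G_j}}(t) = \Hilb_{S/J_{G_{j-1}}}(t) + \frac{1}{(1-t)^{2p_j}} + \frac{(n_j-1)t+1}{(1-t)^{n_j+1}} - \frac{(n_{j-1}-1)t+1}{(1-t)^{n_{j-1}+1}} - \frac{(p_j-1)t+1}{(1-t)^{p_j+1}}.$$
Summing this identity over $j = 2, \ldots, k$ with the base case $\Hilb_{S/J_{G_1}}(t) = (1-t)^{-2p_1}$, the middle pair of terms telescopes to $\frac{(n-1)t+1}{(1-t)^{n+1}} - \frac{(p_1-1)t+1}{(1-t)^{p_1+1}}$, while the remaining contributions assemble into the two sums over $i \in [k]$. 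This yields the claimed Hilbert series; one may also phrase the argument as an induction on $k$.

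For the dimension, I would read off the order of the pole at $t = 1$. The pole orders appearing are $2p_i$ from the first sum (with maximal value $2p_1$) and $n+1$ from the last summand, the negative sum contributing only the strictly smaller orders $p_i + 1 \le p_1 + 1 \le n+1$. Hence $\dim(S/J_G) = \max\{2p_1, n+1\}$, once one checks that no cancellation lowers these leading orders. For the multiplicity I would clear denominators, write $\Hilb_{S/J_G}(t) = Q(t)/(1-t)^{\dim}$, and compute $e(S/J_G) = Q(1)$, splitting into the regimes $2p_1 < n+1$, $2p_1 > n+1$, and $2p_1 = n+1$.

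The delicate point, and the main obstacle, is the boundary analysis needed to justify both the dimension and the multiplicity. One first observes that $2p_1 > n+1$ (respectively $2p_1 = n+1$) forces $p_1 > p_2$, so that $(1-t)^{-2p_1}$ is the unique term of top pole order and the coefficient does not vanish. In the equality regime, combining $(1-t)^{-2p_1}$ with $\frac{(n-1)t+1}{(1-t)^{n+1}}$ produces the numerator $2 + (n-1)t$, which equals $n+1 \neq 0$ at $t=1$, so no cancellation occurs and $e(S/J_G) = n+1$; in the regime $2p_1 < n+1$ every summand except $\frac{(n-1)t+1}{(1-t)^{n+1}}$ acquires a strictly positive power of $(1-t)$ after clearing denominators and thus vanishes at $t=1$, leaving $e(S/J_G) = n$. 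The degenerate case $p_1 = 1$, which forces $G = K_n$, must be recorded separately since there $2p_1 = 2$ could coincide with $n+1$ only when $n = 1$, yet $e(S/J_G) = n$ still holds; this is where the extra clause in the statement is needed.
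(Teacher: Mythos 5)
Your proposal is correct and follows exactly the paper's route: the paper also writes $G$ as the iterated join $(\cdots(K_{p_1}^c*K_{p_2}^c)*\cdots)*K_{p_k}^c$ and recursively applies Theorem \ref{4.15}, leaving the telescoping and the dimension/multiplicity bookkeeping to the reader. Your explicit telescoping sum and the boundary analysis (in particular noting that $2p_1\geq n+1$ forces $p_1>p_2$, so no cancellation occurs in the top pole order, and that the clause $p_1=1$ only matters when $n=1$) simply supply the details the paper's two-line proof omits.
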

\begin{proof}
Note that $G= (\cdots(K_{p_1}^c*K_{p_2}^c)*\cdots)* K_{p_r}^c$. Now the result follows by recursively applying Theorem \ref{4.15}.
\end{proof}
 
\bibliographystyle{plain}  
\bibliography{binomial}

\end{document}